\newtheorem{theo}{Theorem}[section]
\newtheorem{lem}[theo]{Lemma}
\newtheorem{prop}[theo]{Proposition}
\newtheorem{cor}[theo]{Corollary}
\theoremstyle{definition}
\theoremstyle{remark}
\newtheorem{rem}[theo]{Remark}
\newtheorem{exam}{Example}[section]
\newtheorem{conj}{Conjecture}[section]
\numberwithin{equation}{section}
\newcommand{\bR}{{\mathbb R}}
\newcommand{\bZ}{{\mathbb Z}}
\newcommand{\bZp}{{{\mathbb Z}^+}}
\newcommand{\bfS}{{\mathbf S}}
\newcommand{\red}{\textcolor{red}}
\newcommand{\blue}{\textcolor{blue}}
\def\wdt{\widetilde}
\newcommand{\upM}{{\overline{M}}}
\newcommand{\lowM}{{\underline{M}}}
\newcommand{\Var}{\mathrm{Var}}
\def\ga{\alpha}
\def\gl{\lambda}
\def\upBdim{\overline{\dim}_B}
\def\lowBdim{\underline{\dim}_B}
\def\Bdim{\dim_B}
\newcommand{\card}{{\rm card\,}}
\begin{document}

\title[Box dimension of generalized affine FIFs (II)]{Box dimension of generalized affine \\ fractal interpolation functions (II)}

\author{Lai Jiang}
\address{School of Mathematical Sciences, Zhejiang University, Hangzhou 310058, China}
\email{jianglai@zju.edu.cn}

\author{Huo-Jun Ruan}
\address{School of Mathematical Sciences, Zhejiang University, Hangzhou 310058, China}
\email{ruanhj@zju.edu.cn}
\thanks{The research was supported in part by NSFC grant 12371089, ZJNSF grant LY22A010023, and the Fundamental Research Funds for the Central Universities of China grant 226-2024-00136.}

\thanks{Corresponding author: Huo-Jun Ruan}

\subjclass[2020]{Primary 28A80; Secondary 41A30.}

\date{}

\keywords{ Fractal interpolation functions, box dimension, iterated function systems, vertical scaling functions, spectral radius, vertical scaling matrices}

\begin{abstract}
  Let $f$ be a generalized affine fractal interpolation function with vertical scaling functions. In this paper, we prove the monotonicity of spectral radii of vertical scaling matrices without additional assumptions. We also obtain the irreducibility of these matrices under certain conditions. By these results, we estimate $\Bdim \Gamma f$, the box dimension of the graph of $f$, by the limits of spectral radii of vertical scaling matrices. We also estimate $\Bdim \Gamma f$ directly by the sum function of vertical scaling functions. As an application, we study the box dimension of the graph of a generalized Weierstrass-type function. 
\end{abstract}

\maketitle

\section{Introduction}
\label{intro}

Fractal interpolation functions (FIFs) were introduced by Barnsley \cite{Bar86} in 1986. Basically, an FIF $f$ is a function which interpolates given data and its graph is the invariant set of an iterated function system (IFS). 

The generalized affine FIFs are an important class of FIFs. There are many works on these FIFs, including theoretical analysis \cite{All20,BRS20,BH89,CKV15,Dubuc18,WY13} and applications \cite{MaHa92,VDL94}. In particular, there are some works on the box dimension of the graphs of generalized affine FIFs \cite{BaMa15,Bed89-ConAppr,Bed89-Nonlinearity,Fen08,JhaVer21,RSY09}. In \cite{JiangRuan}, the authors introduced  vertical scaling matrices. We also obtained the monotonicity of spectral radii and irreducibility of these matrices under certain conditions. Then we estimated the box dimension of generalized affine FIFs by the limits of the spectral radii of these matrices. 

In the present paper, we continue the study of vertical scaling matrices and the box dimension of generalized affine FIFs. Mainly, we prove the monotonicity of spectral radii of vertical scaling matrices without additional assumptions, and obtain the irreducibility of vertical scaling matrices under weaker conditions than that in \cite{JiangRuan}. By using these results, we estimate the box dimension by  the limits of vertical scaling matrices under weaker conditions. We also estimate the box dimension directly by the sum function of vertical scaling functions. We remark that the class of generalized affine FIFs in the present paper is more general than the setting in \cite{JiangRuan}, so that our results are applicable to some classical fractal functions, including the classical Weierstrass functions.

The paper is organized as follows. In section~2, we recall some basic definitions and present main results.  In section 3, we study the monotonicity of spectral radii and irreducibility of vertical scaling matrices. By using these results, in section~4, we estimate the box dimension of generalized  affine FIFs by the limits of radii of vertical scaling matrices under certain conditions. In section~5, we estimate the box dimension of generalized affine FIFs by the sum function of the vertical scaling functions. In section~6, we apply our results to study the box dimension of a generalized Weierstrass-type function, and make some further remarks.

\section{Preliminaries and main results}
\subsection{The definition of generalized affine FIFs}

Let $N\geq 2$ be a positive integer. Given a data set $\{(x_n,y_n)\}_{n=0}^N\subset \bR^2$ with $x_0<x_1<\ldots<x_N$,  we define a family of functions $\{W_n\}_{n=1}^N$ from $[x_0,x_N]\times \bR$ to $[x_0,x_N]\times \bR$  by
\[
  W_n(x,y)=(a_n x+b_n, S_n(x) y+q_n(x)), \quad 1\leq n\leq N,
\]
such that for each $n$, $a_n$ and $b_n$ are real numbers, $S_n$ and $q_n$ are continuous functions on $[x_0,x_N]$ with $|S_n(x)|<1$ for all $x\in [x_0,x_N]$ and
\[
	W_n(x_0,y_0)=(x_{n-1},y_{n-1}), \quad W_n(x_N,y_N)=(x_{n},y_{n}).
\]
According to Barnsley's classical result \cite{Bar86} , there exists a unique continuous function $f$ on $[x_0,x_N]$ such that its graph $\Gamma f:=\{(x,f(x)):\, x\in [x_0,x_N]\}$ is the invariant set of the iterated function system (IFS for short) $\{W_n:\, 1\leq n\leq  N\}$, i.e.,
\begin{equation}\label{eq:1-5}
	\Gamma f=\bigcup_{n=1}^N W_n(\Gamma f).
\end{equation}
Furthermore, the function $f$ always interpolates the data set, i.e., $f(x_n)=y_n$ for all $0\leq n\leq N$.
The function $f$ is called the \emph{generalized affine fractal interpolation function} (generalized affine FIF for short) determined by the IFS $\{W_n\}_{n=1}^N$.

In the present paper, we will study $\dim_B \Gamma f$, the box dimension of the graph of $f$,  
where the following conditions are satisfied for each $n$:
\begin{itemize}
	\item[(A1)] $x_n-x_{n-1}=(x_N-x_0)/N$,
	\item[(A2)] $S_n$ is of bounded variation on $[x_0,x_N]$ and $|S_n(x)|<1$ for all $x\in [x_0,x_N]$,
	\item[(A3)] $q_n$ is of bounded variation on $[x_0,x_N]$.
\end{itemize}

\subsection{Main results}

In the rest of the paper, we write $I=[x_0,x_N]$ for simplicity.
We define a function $\gamma$ on $I$ by
\begin{equation*}
	\gamma(x)=\sum_{n=1}^{N} |S_n(x)|.
\end{equation*}
We call $\gamma$ the \emph{sum function} of the family of vertical scaling functions $\bfS=\{S_n\}_{n=1}^N$. Write $\gamma^*=\max_{x \in I} \gamma(x)$ and $\gamma_*=\min_{x \in I} \gamma(x)$.

Given a closed interval $J=[a,b]$, for each $k\in \mathbb{Z}^+$ and $1\leq j\leq N^k$, we write
\begin{equation}\label{def:J-jk}
	J_j^k=\Big[a+\frac{j-1}{N^k}(b-a),a+\frac{j}{N^k}(b-a)\Big].
\end{equation}

Given $k\in \bZp$, $1\leq n\leq N$ and $1\leq j\leq N^k$, we set
\[
  \overline{s}_{n,j}^k=\max_{x \in I_{j}^k} |S_n(x)|, \qquad \underline{s}_{n,j}^k=\min_{x \in I_{j}^k} |S_n(x)|.
\]

Now, for every $k\in \bZp$, we define a matrix $\upM_{k}$ by setting for $1\leq n\leq N$, $1\leq \ell\leq N^{k-1}$ and $1\leq j\leq N^k$,
\begin{equation*}
	(\upM_k)_{(n-1)N^{k-1}+\ell,j}=\begin{cases}
		\overline{s}^k_{n,j}, & \mbox{if } (\ell-1)N< j\leq \ell N, \\
		0, & \mbox{otherwise}.
	\end{cases}
\end{equation*}
Similarly, we define another $N^k \times N^k$ matrix $\lowM_k$ by
replacing $\overline{s}_{n,j}^k$ with $\underline{s}_{n,j}^k$. We call $\upM_k$ (resp. $\lowM_k$) the upper (resp. the lower) \emph{vertical scaling matrix} with level-$k$.

In this paper, we prove the monotonicity of spectral radii of vertical scaling matrices without additional assumptions. We also obtain the irreducibility of lower vertical scaling matrices under a weaker condition than that in \cite{JiangRuan}.


\begin{theo}\label{thm:Main-Results-1}
	With previous notations, we have
	\begin{enumerate}
		\item $\rho(\upM_{k})$, the spectral radius of $\upM_{k}$, is decreasing with respect to $k$. As a result, $\rho^*=\lim_{k \to \infty} \rho(\upM_{k})$ exists.
		\item $\rho(\lowM_{k})$ is increasing with respect to $k$. As a result, $\rho_*=\lim_{k \to \infty} \rho(\lowM_{k})$ exists.
		\item If $|S_n|$ is positive on $I$ for all $1\leq n\leq N$, then $\rho_*=\rho^*$.
		\item If $\gamma_*\geq 1$ and $S_n$ has only finitely many zero points on $I$ for all $1\leq n\leq N$, then $\lowM_{k}$ is primitive for sufficiently large enough $k\in \bZp$.
	\end{enumerate}
\end{theo}

By using same arguments in \cite{JiangRuan}, we can show that $\upM_{k}$ is primitive for all $k\in \bZp$ if the function $S_n$ is not identically zero on every subinterval of $I$ for all $1 \leq n \leq N$.

Let $\Var(f,I)$ be the classical total variation of $f$ on $I$.
By using Theorem~\ref{thm:Main-Results-1} and the irreduciblity of $\upM_{k}$, we can obtain the estimate of the box dimension of $\Gamma f$.
\begin{theo}\label{thm:Main-Results-2}
	Let $f$ be a generalized affine FIF satisfying conditions (A1)-(A3). Then we have the following results on the box dimension of $\Gamma f$.
	\begin{enumerate}
		\item Assume that the function $S_n$ is not identically zero on every subinterval of $I$ for all $1 \leq n \leq N$.
		Then
		$
		\overline{\dim}_{B} \Gamma f  \leq  \max\{1,1+ \log_N  \rho^*\}.
		$
		
		\item Assume that $\gamma_* \geq 1$ and the function $S_n$ has only finitely many zero points on $I$ for all $1 \leq n \leq N$. If $\Var(f,I)=\infty$, then
		$\lowBdim \Gamma f		\geq 1+ \log_N \rho_*$. 
		\item Under the assumption of the previous item and the additional assumption that $\rho_*=\rho^*$, if $\mathrm{Var}(f,I)=\infty$
		and $\rho_{\bfS}>1$, then
		\[
		\Bdim \Gamma f=1+\log_N  \rho_{\bfS}, 
		\]
		otherwise $\Bdim  \Gamma f=1$. Here $\rho_{\bfS}$ is the common value of $\rho_*$ and $\rho^*$.
	\end{enumerate}
\end{theo}

We remark that the assumption $\gamma_*\geq 1$ in Theorems~\ref{thm:Main-Results-1} and \ref{thm:Main-Results-2} can be replaced by a weaker assumption $\mathcal{ZM}(\mathbf{S})\geq N-2$. Please see section~3 for the definition of $\mathcal{ZM}(\mathbf{S})$.

It is easy to see that 
\[
  \gamma_*\leq \rho_* \leq \rho^*\leq \gamma^*.
\]
In section~5, we estimate the box dimension of $\Gamma f$ by $\gamma_*$ and $\gamma^*$ under weaker assumptions than these in Theorem~\ref{thm:Main-Results-2}.  
Akhtar, Prasad and Navascu\'{e}s \cite{APN17} used $S_{\max}=\max\{|S_i(x)|:\, x\in [0,1], 1\leq i\leq N\}$ and $S_{\min}=\min\{|S_i(x)|:\, x\in [0,1], 1\leq i\leq N\}\}$ to estimate the box dimension of $\alpha$-fractal functions, which is a special class of generalized affine FIFs. Our results are better than that in \cite{APN17}.

\section{Analysis on vertical scaling matrices}

\subsection{Some well known theorems and definitions}

We recall some notations and definitions in matrix analysis \cite{HorJoh90}.
Given a matrix $A=(a_{ij})_{n\times n}$, we say $A$ is \emph{nonnegative} (resp. \emph{positive}), denoted by $A\geq 0$ (resp. $A>0$), if $a_{ij}\geq0$ (resp. $a_{ij}>0$) for all $i$ and $j$. Let $B=(b_{ij})_{n\times n}$ be another matrix. We write $A\geq B$ (resp. $A>B$) if $a_{ij}\geq b_{ij}$ (resp. $a_{ij}>b_{ij}$) for all $i$ and $j$. Similarly, given $u=(u_1,\ldots,u_n),v=(v_1,\ldots,v_n)\in \bR^n$, we write $u\geq v$ (resp. $u>v$) if $u_i\geq v_i$ (resp. $u_i>v_i$) for all $i$.


A nonnegative matrix $A=(a_{ij})_{n\times n}$ is called \emph{irreducible} if for any $i,j\in \{1,\ldots,n\}$, there exists a finite sequence $i_0,\ldots,i_t\in \{1,\ldots,n\}$ such that $i_0=i,i_t=j$ and $a_{i_{\ell-1},i_\ell}>0$ for all $1\leq \ell \leq t$. $A$ is called \emph{primitive} if there exists $k\in \bZ^+$ such that $A^k>0$. It is clear that a primitive matrix is irreducible.

Given an $n\times n$ matrix $A$, we write $\sigma(A)$ the set of all eigenvalues of $A$ and define $\rho(A)=\max\{|\gl|:\, \gl\in \sigma(A)\}$. We call $\rho(A)$ the \emph{spectral radius} of $A$.

The following two lemmas are well known. Please see \cite[Chapter 8]{HorJoh90} for details.
\begin{lem}[Perron-Frobenius Theorem]\label{th:PF}
Let $A=(a_{ij})_{n\times n}$ be an irreducible nonnegative matrix. Then
\begin{enumerate}
	\item $\rho(A)$ is positive,
	\item $\rho(A)$ is an eigenvalue of $A$ and has a  positive eigenvector,
	\item $\rho(A)$ increases if any element of $A$ increases.
\end{enumerate}
\end{lem}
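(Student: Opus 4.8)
The plan is to prove the three assertions by the classical variational (Wielandt) route, whose engine is the elementary but crucial observation that irreducibility of a nonnegative $A$ is equivalent to the strict positivity of $(I+A)^{n-1}$. I would record this first as a lemma: writing $(I+A)^{n-1}=\sum_{k=0}^{n-1}\binom{n-1}{k}A^{k}$, the $(i,j)$ entry is positive precisely when there is a walk of length at most $n-1$ from $i$ to $j$ in the directed graph whose edges encode the positive entries of $A$ (the $k=0$ term handling $i=j$). Since irreducibility says this graph is strongly connected and any two distinct vertices are then joined by a path of length at most $n-1$, every entry of $(I+A)^{n-1}$ is positive.

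Next, for $x\geq 0$ with $x\neq 0$ I would set $r(x)=\min\{(Ax)_i/x_i:\ x_i>0\}$, equivalently the largest $t\geq 0$ with $Ax\geq tx$, and define $\rho=\sup\{r(x):x\in\Delta\}$, where $\Delta=\{x\geq 0:\|x\|_1=1\}$. The function $r$ is discontinuous on the boundary of $\Delta$, and controlling this is the crux of the argument. I would sidestep it as follows: if $Ax\geq r(x)x$, then applying the positive, commuting operator $(I+A)^{n-1}$ gives $Ay\geq r(x)y$ for $y=(I+A)^{n-1}x$, so $r(y)\geq r(x)$; hence $\rho$ is unchanged if the supremum is restricted to the image of $\Delta$ under $(I+A)^{n-1}$, a compact set of strictly positive vectors on which $r$ is continuous. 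Compactness then yields a maximizer $z>0$ with $Az\geq\rho z$. To upgrade this to $Az=\rho z$, suppose $v:=Az-\rho z\geq 0$ were nonzero; with $w:=(I+A)^{n-1}z>0$ one computes $Aw-\rho w=(I+A)^{n-1}v>0$, so $Aw\geq(\rho+\varepsilon)w$ for some $\varepsilon>0$ and $r(w)>\rho$, a contradiction. Thus $\rho$ is an eigenvalue with a strictly positive eigenvector, which gives part (2).

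Finally I would identify $\rho$ with the spectral radius and settle parts (1) and (3). For any eigenvalue $\lambda$ with eigenvector $u$, taking moduli in $\lambda u_i=\sum_j a_{ij}u_j$ yields $|\lambda|\,|u|\leq A|u|$ componentwise, so $|\lambda|\leq r(|u|)\leq\rho$; since $\rho$ is itself an eigenvalue, $\rho=\rho(A)$. Positivity in (1) follows because irreducibility forbids a zero row, whence $(Az)_i>0$ for the positive eigenvector $z$ and $\rho z_i=(Az)_i>0$ forces $\rho>0$. For the monotonicity (3), if $A'\geq A$ with $A'\neq A$, then $A'$ is again irreducible; applying the theorem to the (nonnegative, irreducible) transpose $A'^{\,T}$ produces a strictly positive left eigenvector $w$ with $w^{T}A'=\rho(A')w^{T}$, and pairing with the right Perron eigenvector $z$ of $A$ gives $\rho(A')\,w^{T}z=w^{T}A'z>w^{T}Az=\rho(A)\,w^{T}z$, the strict inequality coming from $w>0$, $z>0$ and $(A'-A)z\geq 0$ being nonzero; since $w^{T}z>0$ we conclude $\rho(A')>\rho(A)$. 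The principal obstacle throughout is the discontinuity of $r$ on the boundary of $\Delta$ together with the passage from $Az\geq\rho z$ to an equality, and both are handled uniformly by the smoothing operator $(I+A)^{n-1}$.
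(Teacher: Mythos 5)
The paper offers no proof of this lemma: it is quoted as a classical result with a pointer to \cite[Chapter 8]{HorJoh90}, so there is nothing internal to compare against. Your argument is the standard Collatz--Wielandt proof, with the smoothing operator $(I+A)^{n-1}$ doing exactly the work it should (turning irreducibility into strict positivity, bypassing the discontinuity of $r$ on $\partial\Delta$, and upgrading $Az\geq\rho z$ to equality), and the left-eigenvector pairing for the strict monotonicity in (3) is also correct; this is essentially the proof in the cited reference, so nothing is gained or lost relative to the paper.
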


\begin{lem}\label{th:PFN}
Let $A=(a_{ij})_{n\times n}$ be a nonnegative matrix. Then $\rho(A)$ is an eigenvalue of $A$ and there is a nonnegative nonzero vector $x$ such that $Ax=\rho(A)x$.

\end{lem}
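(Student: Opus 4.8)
The plan is to reduce the general (possibly reducible) case to the irreducible Perron--Frobenius theorem (Lemma~\ref{th:PF}) by a perturbation argument followed by a limiting procedure. Let $E$ be the $n\times n$ matrix all of whose entries equal $1$, and for each $\varepsilon>0$ set $A_\varepsilon=A+\varepsilon E$. Since $A\geq 0$, every entry of $A_\varepsilon$ is strictly positive, so $A_\varepsilon>0$; in particular $A_\varepsilon$ is primitive, hence irreducible. Thus Lemma~\ref{th:PF} applies to $A_\varepsilon$: the spectral radius $\rho(A_\varepsilon)$ is positive, is an eigenvalue of $A_\varepsilon$, and admits a positive eigenvector, which after normalization we may take to satisfy $\lVert x_\varepsilon\rVert_1=1$. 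Consequently each $x_\varepsilon$ lies in the compact simplex $K=\{x\in\bR^n:\, x\geq 0,\ \lVert x\rVert_1=1\}$.

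Next I would let $\varepsilon\to 0+$ along some sequence $\varepsilon_m\to 0+$. By compactness of $K$, after passing to a subsequence we may assume $x_{\varepsilon_m}\to x$ for some $x\in K$; then $x\geq 0$, and since $\lVert x\rVert_1=1$ we have $x\neq 0$. The spectral radius depends continuously on the entries of a matrix, because the eigenvalues are the roots of the characteristic polynomial whose coefficients are polynomials in the entries; hence $\rho(A_{\varepsilon_m})\to\rho(A)$. Passing to the limit in the eigenvalue identity
\[
  A_{\varepsilon_m}x_{\varepsilon_m}=\rho(A_{\varepsilon_m})\,x_{\varepsilon_m},
\]
and using that $A_{\varepsilon_m}\to A$ entrywise together with the continuity of matrix--vector multiplication, we obtain $Ax=\rho(A)x$. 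This shows at once that $\rho(A)$ is an eigenvalue of $A$ and that it possesses a nonnegative nonzero eigenvector, which is precisely the assertion.

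The steps demanding the most care are the convergence $\rho(A_{\varepsilon_m})\to\rho(A)$ and the nonvanishing of the limit vector $x$. For the latter it is crucial to normalize the eigenvectors before taking the limit: without the constraint $\lVert x_\varepsilon\rVert_1=1$ the limit could collapse to the zero vector, whereas with it the limit remains in the closed set $K$ and is therefore nonzero. For the former, should one wish to avoid appealing to continuity of the spectrum, one may instead observe that for $0<\varepsilon'<\varepsilon$ the irreducible matrix $A_{\varepsilon'}$ has strictly smaller entries than $A_\varepsilon$, so Lemma~\ref{th:PF}(3) yields $\rho(A_{\varepsilon'})<\rho(A_\varepsilon)$; hence $\rho(A_\varepsilon)$ decreases to a limit $L\geq 0$ as $\varepsilon\to 0+$. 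The limiting eigenvalue equation then gives $L\in\sigma(A)$, so $L\leq\rho(A)$, while $0\leq A^m\leq A_\varepsilon^m$ entrywise forces $\rho(A)\leq\rho(A_\varepsilon)$ for every $\varepsilon$, hence $\rho(A)\leq L$; therefore $L=\rho(A)$.
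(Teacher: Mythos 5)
Your argument is correct. The paper gives no proof of this lemma at all --- it is stated as a well-known fact with a citation to Horn and Johnson --- and your perturbation-plus-compactness argument (replace $A$ by the positive matrix $A+\varepsilon E$, apply the irreducible Perron--Frobenius theorem, normalize the eigenvectors, and pass to a subsequential limit using continuity of the spectral radius) is precisely the standard proof found in that reference, so there is nothing to compare beyond noting that your fallback via monotonicity of $\rho$ is a valid way to sidestep continuity of the spectrum.
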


\subsection{Monotonicity of spectral radii of vertical scaling matrices}

\begin{theo}\label{thm:rho-upstar-exist}
  For all $k\in \bZ^+$,
  \[
    \rho(\upM_{k+1}) \leq  \rho (\upM_k).
  \]
  As a result, $\lim_{k\to \infty} \rho(\upM_k)$ exists, denoted by $\rho^*$.
\end{theo}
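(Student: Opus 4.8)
The plan is to realize $\upM_k$ as a product of two rectangular nonnegative matrices and then compare the \emph{reversed} product with $\upM_{k+1}$. Concretely, I would introduce the child-summation matrix $S\in\bR^{N^k\times N^{k+1}}$ with $S_{j,j'}=1$ exactly when $(j-1)N<j'\le jN$ (so that $(Sz)_j$ adds up the entries of $z$ over the $N$ level-$(k+1)$ subintervals of $I_j^k$), and the scaling-injection matrix $T\in\bR^{N^{k+1}\times N^k}$ whose nonzero entries in column $\ell$ are $T_{(i-1)N^k+\ell,\,\ell}=\overline{s}^{\,k}_{i,\ell}$ for $1\le i\le N$. Both are nonnegative.

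First I would verify by a direct index computation that $ST=\upM_k$: for $n=(i-1)N^{k-1}+\ell$ one finds $(STv)_n=\sum_{j\in\{(\ell-1)N+1,\dots,\ell N\}}\overline{s}^{\,k}_{i,j}v_j$, which is exactly the $n$-th row action of $\upM_k$ recorded in \eqref{eq:Mk-def}. Next I would examine the reversed product $\widetilde M:=TS\in\bR^{N^{k+1}\times N^{k+1}}$; a short computation gives $\widetilde M_{m',j'}=\overline{s}^{\,k}_{i,\ell'}$ when $m'=(i-1)N^k+\ell'$ and $(\ell'-1)N<j'\le\ell'N$, and $0$ otherwise. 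Since each such $I_{j'}^{k+1}$ is a subinterval of $I_{\ell'}^k$, we have $\overline{s}^{\,k+1}_{i,j'}=\max_{I_{j'}^{k+1}}|S_i|\le\max_{I_{\ell'}^k}|S_i|=\overline{s}^{\,k}_{i,\ell'}$, and comparing with the definition of $\upM_{k+1}$ shows $0\le\upM_{k+1}\le\widetilde M$ entrywise.

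Two standard facts then finish the argument. The products $ST$ and $TS$ share the same nonzero spectrum (from $\det(\lambda I-TS)=\lambda^{N^{k+1}-N^k}\det(\lambda I-ST)$), so $\rho(\widetilde M)=\rho(TS)=\rho(ST)=\rho(\upM_k)$; and for nonnegative matrices entrywise domination is inherited by the spectral radius, so $0\le\upM_{k+1}\le\widetilde M$ gives $\rho(\upM_{k+1})\le\rho(\widetilde M)=\rho(\upM_k)$. The monotonicity $0\le A\le B\Rightarrow\rho(A)\le\rho(B)$ follows from $A^p\le B^p$ entrywise together with Gelfand's formula $\rho(A)=\lim_p\|A^p\|^{1/p}$, and refines Lemma~\ref{th:PF}(3) to the possibly reducible case. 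Finally, since $\{\rho(\upM_k)\}_{k\ge1}$ is nonincreasing and bounded below by $0$, it converges, which defines $\rho^*$.

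The main obstacle I expect is purely organizational: pinning down the two interlocking index conventions (the row index $(i-1)N^{k-1}+\ell$ versus the block $\{(\ell-1)N+1,\dots,\ell N\}$ of columns, across levels $k$ and $k+1$) carefully enough to confirm the identity $ST=\upM_k$ and the entrywise bound $TS\ge\upM_{k+1}$ with no off-by-$N$ slips. The spectral inputs, namely the coincidence of the nonzero spectra of $ST$ and $TS$ and the monotonicity of $\rho$ under entrywise domination, are routine, but I would state them explicitly since $\upM_k$ need not be irreducible, so Lemma~\ref{th:PF} alone does not apply.
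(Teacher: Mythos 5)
Your proof is correct and takes essentially the same route as the paper: your matrix $TS$ is exactly the auxiliary matrix $\upM_k^*$ defined in \eqref{eq:upMkstar-def}, and the paper's two eigenvector constructions (which in your notation are $u\mapsto Su$ and $v\mapsto Tv$) are precisely the standard proof that $ST$ and $TS$ share their nonzero spectrum. The only cosmetic difference is that you cite that fact, together with Gelfand's formula for the entrywise monotonicity of $\rho$ on nonnegative (possibly reducible) matrices, rather than unpacking it via explicit eigenvectors as the paper does.
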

  In \cite{JiangRuan}, we proved this theorem under an additional assumption. Essentially, we required that $\upM_{k}$ are irreducible for all $k$. 
\begin{proof}
  Similarly as in \cite{JiangRuan}, we introduce another $N^{k+1}\times N^{k+1}$ matrix $\upM^*_k$ as follows:
\begin{equation*}
	(\upM_k^*)_{(n-1)N^{k}+\ell,j}=\begin{cases}
    \overline{s}^k_{n,\ell}, & \mbox{if } (\ell-1)N< j\leq \ell N, \\
    0, & \mbox{otherwise},
  \end{cases}
\end{equation*}
for $1\leq n\leq N$, $1\leq \ell\leq N^k$ and $1\leq j\leq N^{k+1}$. Using the same arguments in the proof of \cite[Theorem~3.3]{JiangRuan}, we have $\upM_{k+1}\leq \upM^*_k$ so that $\rho(\upM_{k+1})\leq\rho(\upM^*_k)$.

Now we prove that $\rho(\upM_k^*)=\rho(\upM_k)$. The proof is divided into two parts.  Firstly we show that $\rho(\upM_k)\geq \rho(\upM_k^*)$. Write $\gl=\rho(\upM_k^*)$. From Lemma~\ref{th:PFN}, $\gl$ is an eigenvalue of $\upM_k^*$ and there is a nonnegative nonzero vector $ u=(u_1,\ldots,u_{N^{k+1}})^T$ such that $\upM_k^* u =\gl u$. We define a vector $u'=(u_1',\ldots,u_{N^k}')^T$ by
\[
  u'_j=\sum_{p=(j-1)N+1}^{jN} u_p, \quad 1\leq j \leq N^k.
\]
It is clear that $u'$ is also nonnegative and nonzero. By using the same arguments in the proof of  \cite[Theorem~3.3]{JiangRuan}, we can obtain that $\upM_k u'=\gl u'$ so that $\gl$ is an eigenvalue of $\upM_k$. Hence,  $\rho(\upM_k^*)=\gl\leq \rho(\upM_k)$.

Secondly  we show that $\rho(\upM_k)\leq \rho(\upM_k^*)$. Without loss of generality, we may assume that $\mu:=\rho(\upM_k)>0$.  From Lemma~\ref{th:PFN}, $\mu$ is an eigenvalue of $\upM_k$ and there is a nonnegative nonzero vector $ v=(v_1,\ldots,v_{N^{k}})^T$ such that $\upM_k v =\mu v$. We define a vector $v'=(v_1',\ldots,v_{N^{k+1}}')^T$ by
\[
  v'_{(n-1)N^k + \ell} = \overline{s}_{n,\ell}^k v_\ell, \quad 1 \leq n \leq N, 1 \leq \ell \leq N^k.
\]
It is clear that $v'$ is nonnegative. Furthermore, it follows from $\upM_k v =\mu v$ that  for all $1 \leq n \leq N$ and $1 \leq j \leq N^{k-1}$,
\begin{equation}\label{eq:eigenv-Mk02}
	\mu v_{(n-1)N^{k-1}+j}
	= \sum_{t=(j-1)N+1}^{jN} \overline{s}_{n,t}^k v_t
	= \sum_{t=(j-1)N+1}^{jN} v'_{(n-1)N^k+t}.
\end{equation}
Thus $v'$ is a nonzero vector since otherwise, $v$ is a zero vector which is a contradiction.
For any $1 \leq n \leq N$ and $1 \leq \ell \leq N^k$, there exist $1 \leq n' \leq N$ and $1 \leq j' \leq N^{k-1}$ such that $\ell = (n'-1)N^{k-1}+ j'$. Thus,
\begin{align*}
(\upM_k^* v')_{(n-1)N^k+\ell}
= \overline{s}_{n,\ell}^k \sum_{p=(\ell-1)N+1}^{\ell N} v'_p  & = \overline{s}_{n,\ell}^k \sum_{t=(j'-1)N+1}^{j'N} v'_{(n'-1)N^k+t}    \\
&= \overline{s}_{n,\ell}^k \mu v_{(n'-1)N^{k-1}+j'}    \qquad \qquad (\textrm{By \eqref{eq:eigenv-Mk02}})\\
&= \mu \overline{s}_{n,\ell}^k v_\ell = \mu v'_{(n-1)N^k + \ell},
\end{align*}
which implies $\upM_k^* v'= \mu v'$ so that $\mu$ is an eigenvalue of $\upM_k^*$. Hence,  $\rho(\upM_k)=\mu\leq \rho(\upM_k^*)$.

From the above arguments, $\rho (\upM_{k+1}) \leq \rho (\upM_{k}^*)  = \rho (\upM_{k})$. Since $\rho(\upM_k)\geq 0$ for all $k$, we know that $\lim_{k\to\infty} \rho(\upM_k)$ exists. 
\end{proof}

We remark that compared to \cite[Theorem~3.3]{JiangRuan}, the different part in the above proof is the proof of $\rho(\upM_{k})\leq \rho(\upM_{k}^*)$.
Similarly, we can obtain the following result. 
\begin{theo}\label{thm:rho-lowstar-exist}
  For all $k\in \bZ^+$,
  \[
    \rho(\lowM_{k+1}) \geq  \rho (\lowM_k).
  \]
  As a result, $\lim_{k\to \infty} \rho(\lowM_k)$ exists, denoted by $\rho_*$.
\end{theo}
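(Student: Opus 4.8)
The plan is to mirror the proof of Theorem~\ref{thm:rho-upstar-exist}, reversing every inequality. The one genuine difference is that the monotonicity of the refinement now runs the other way: since $I_j^{k+1}\subseteq I_\ell^{k}$ whenever $(\ell-1)N<j\leq \ell N$, taking the minimum over the smaller interval can only \emph{increase} the value, so $\underline{s}_{i,j}^{k+1}\geq \underline{s}_{i,\ell}^{k}$, whereas for the maxima one had $\overline{s}_{i,j}^{k+1}\leq \overline{s}_{i,\ell}^{k}$. This single sign flip is what will turn the conclusion $\rho(\upM_{k+1})\leq\rho(\upM_k)$ into $\rho(\lowM_{k+1})\geq\rho(\lowM_k)$.

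First I would introduce the inflated $N^{k+1}\times N^{k+1}$ matrix $\lowM_k^*$ defined, for $1\leq i\leq N$, $1\leq \ell\leq N^k$ and $1\leq j\leq N^{k+1}$, by
\[
  (\lowM_k^*)_{(i-1)N^{k}+\ell,j}=\begin{cases}
    \underline{s}^k_{i,\ell}, & \mbox{if } (\ell-1)N< j\leq \ell N, \\
    0, & \mbox{otherwise},
  \end{cases}
\]
exactly as in \eqref{eq:upMkstar-def} but with $\underline{s}$ in place of $\overline{s}$. Comparing this with the definition of $\lowM_{k+1}$ (that is, \eqref{eq:Mk-def} with $k$ replaced by $k+1$ and $\overline{s}$ by $\underline{s}$), I observe that both matrices share the same zero pattern, and that on the nonzero positions the entry of $\lowM_{k+1}$ is $\underline{s}_{i,j}^{k+1}$ while that of $\lowM_k^*$ is $\underline{s}_{i,\ell}^{k}$. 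By the inequality $\underline{s}_{i,j}^{k+1}\geq \underline{s}_{i,\ell}^{k}$ noted above, this gives the entrywise bound $\lowM_{k+1}\geq \lowM_k^*\geq 0$, and hence $\rho(\lowM_{k+1})\geq \rho(\lowM_k^*)$ by the monotonicity of the spectral radius of nonnegative matrices.

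It then remains to prove $\rho(\lowM_k^*)=\rho(\lowM_k)$, and here I would repeat the two eigenvector constructions from the proof of Theorem~\ref{thm:rho-upstar-exist}, since those manipulations are purely algebraic and are insensitive to whether the entries arose from maxima or minima. Concretely, for $\rho(\lowM_k^*)\geq \rho(\lowM_k)$ I take (via Lemma~\ref{th:PFN}) a nonnegative nonzero eigenvector $v$ of $\lowM_k$ for $\mu:=\rho(\lowM_k)$, assume $\mu>0$ without loss of generality, set $v'_{(i-1)N^k+\ell}=\underline{s}_{i,\ell}^k v_\ell$, and verify that $v'$ is nonzero with $\lowM_k^* v'=\mu v'$; for the reverse inequality I take a nonnegative nonzero eigenvector $u$ of $\lowM_k^*$ for $\lambda:=\rho(\lowM_k^*)$, set $u'_j=\sum_{p=(j-1)N+1}^{jN}u_p$, and verify $\lowM_k u'=\lambda u'$. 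Combining everything yields $\rho(\lowM_{k+1})\geq \rho(\lowM_k^*)=\rho(\lowM_k)$, so $\{\rho(\lowM_k)\}$ is nondecreasing. Finally, the sequence is bounded above, e.g.\ by $\rho(\upM_1)$, since $\lowM_k\leq \upM_k$ entrywise gives $\rho(\lowM_k)\leq\rho(\upM_k)\leq \rho(\upM_1)$ (or, more elementarily, by the maximal row sum, which is at most $N$), so $\lim_{k\to\infty}\rho(\lowM_k)=\rho_*$ exists. I do not anticipate a substantive obstacle: the only delicate point is getting the refinement inequality in the correct (reversed) direction, and after that the argument is a line-by-line transcription of the upper-bound case.
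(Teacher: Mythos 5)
Your proposal is correct and is precisely the argument the paper intends: Theorem~\ref{thm:rho-lowstar-exist} is proved there only by the remark ``similarly,'' i.e.\ by mirroring the proof of Theorem~\ref{thm:rho-upstar-exist}, and you have correctly identified the one substantive change, namely that refining $I_\ell^k$ into the subintervals $I_j^{k+1}$ reverses the comparison to $\underline{s}_{i,j}^{k+1}\geq \underline{s}_{i,\ell}^k$, which yields $\lowM_{k+1}\geq \lowM_k^*$ and hence the reversed monotonicity, while the two eigenvector computations showing $\rho(\lowM_k^*)=\rho(\lowM_k)$ carry over verbatim. The boundedness observation $\rho(\lowM_k)\leq\rho(\upM_k)$ (or the row-sum bound) correctly completes the existence of the limit.
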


In the case that $\rho_*=\rho^*$, we denote the common value by $\rho_{\bf{S}}$. The following result has been proved in \cite[Proposition~3.5]{JiangRuan} under the assumption that $S_n$ is Lipschitz for all $n$. Our present proof only use the fact that $S_n$ is continuous for all $n$.

\begin{theo}\label{thm:rho-lowstar-equal}
 Assume that $|S_n|$ is positive on $I$ for all $1 \leq n \leq N$. Then $\rho_*=\rho^*$.
\end{theo}
\begin{proof}
	For any $1\leq n\leq N$, from the fact that $S_n$ is continuous and nonzero on $I$, we have 
	\[\underline{S}_n:=\min\{|S_n(x)|:\, x\in I\}>0.\]
	Fix $\varepsilon>0$. Since $S_n$ is uniformly continuous on $I$ for all $n$, we know that for sufficiently large $k$,
	\[
	  \overline{s}_{n,j}^k\leq  \underline{s}_{n,j}^k +\varepsilon \underline{S}_n \leq  (1 +\varepsilon)\underline{s}_{n,j}^k, \quad 1\leq n\leq N, \;\; 1\leq j\leq N^k,
	\]
	so that $\lowM_k \leq \upM_k \leq (1 +\varepsilon) \lowM_k$. Thus, 
	\begin{equation*}
	  \rho (\lowM_k) \leq \rho( \upM_k) \leq (1 +\varepsilon) \rho(\lowM_k)
	\end{equation*}
	for sufficiently large $k$.
	By letting $k$ tend to infinity, $\rho_* \leq \rho^* \leq (1 +\varepsilon) \rho_*$. 
	From the arbitrariness of $\varepsilon$, we have $\rho_*=\rho^*$. 
\end{proof}

\subsection{The irreducibility of vertical scaling matrices}

Recall that
\begin{equation*}
	\gamma(x)=\sum_{n=1}^{N} |S_n(x)|, \quad x\in I,
\end{equation*}
and $\gamma^*=\max_{x \in I} \gamma(x)$, $\gamma_*=\min_{x \in I} \gamma(x)$.
For any $k\in \bZ^+$, we define
\begin{equation*}
	\overline{\gamma}_k=\max_{1 \leq j \leq N^k}
	\sum_{n=1}^{N} \overline{s}_{n,j}^k,\quad
	\underline{\gamma}_k=\min_{1 \leq j \leq N^k}
	\sum_{n=1}^{N} \underline{s}_{n,j}^k.
\end{equation*}

Using the similar arguments in Theorem~\ref{thm:rho-lowstar-equal}, we can obtain that
\[
   \gamma^*=\lim_{k \to \infty} \overline{\gamma}_k, \quad \gamma_*=\lim_{k \to \infty} \underline{\gamma}_k. 
\]
  For every $k\in \bZ^+$, from \cite[Theorem~8.1.22]{HorJoh90},
$	
 \underline{\gamma}_k \leq \rho(\lowM_k) \leq \rho(\upM_k) \leq \overline{\gamma}_k.
$
Hence,
\begin{align*}
\gamma_* \leq \rho_* \leq \rho^*\leq\gamma^*.
\end{align*}
Thus, if $\gamma$ is a constant function on $I$, then $\gamma(x)=\rho_{\bf{S}}$ for all $x\in I$.

Using the same arguments in the proof of \cite[Lemma~3.2]{JiangRuan}, we have the following result.

\begin{lem}\label{lem:primitive}
  Assume that for each $1\leq n\leq N$, vertical scaling function $S_n$ is not identically zero on every subinterval of $I$. Then
  $(\upM_k)^k>0$ for all $k\in \bZ^+$. As a result, $\upM_k$ is primitive for all $k\in \bZ^+$.
\end{lem}

Similarly, we can show that $\lowM_k$ is primitive if $|S_n|$ is positive for each $1\leq n\leq N$. However, it is much more involved to prove the primitivity of $\lowM_k$ under general setting.  In this paper, we will show that $\lowM_k$ is primitive for sufficiently large $k$ if $\gamma_*\geq 1$ and $S_n$ has  finitely many zero points for each $1\leq n\leq N$.




Define the \emph{multiplicity of zero points} of $\bfS=\{S_n:1\leq n\leq N\}$ at $x \in I$ by 
\[\mathcal {ZM}(\bfS,x) =\card \{n: S_n(x)=0, 1\leq n \leq N \},\]
where $\card(A)$ is the cardinality of a set $A$.
Write $\mathcal {ZM}(\bfS) =\max_{x \in I} \mathcal {ZM}(\bfS,x) $. We have the following simple fact.

\begin{lem}\label{lem:gamma-1-p}
If $\gamma_* \geq 1$, then $\mathcal {ZM}( \bfS) \leq N-2$. 
\end{lem}
\begin{proof}
We prove this lemma by contradiction.
Assume that there exists $\wdt{x} \in I$ such that $\mathcal {ZM}(\bfS,\wdt{x}) \geq N-1$. Then there exists $1\leq n_0 \leq N$, such that $S_{n}(\wdt {x})=0$ for all $n \neq n_0$.
Hence,
\begin{align*}
	\gamma_* \leq  \gamma (\wdt x)= \sum_{n=1}^N |S_{n}(\wdt x)| = |S_{n_0} (\wdt x)| <1,
\end{align*}
which contradicts the fact that $\gamma_* \geq 1$.
\end{proof}

\begin{lem}\label{lem:zero point separation}
Assume that $\mathcal {ZM}(\bfS) \leq N-2$ and the function $S_n$ has finitely many zero points for all $1 \leq n \leq N$. Then there exists $k_1 \in \bZp$ such that for all $k>k_1$, every row of $\lowM_k$ has at least $N-1$ positive entries, and every column of $\lowM_k$ has at least $2$ positive entries.
\end{lem}
\begin{proof}

Let $Z_n$ be the set of zero points of $S_n$ for $1\leq n\leq N$ and write $Z=\bigcup_{n=1}^N Z_n$. 
Let $E$ be the set of endpoints of all $I_j^k$ for $1\leq j\leq N^k$ and $k\geq 1$, i.e.,  
\[
  E=\bigcup_{k\geq 1}\{x_0+jN^{-k}(x_N-x_0):\, 0\leq j\leq N^k\}.
\] 
Since $Z$ is a finite set, there exists a positive integer {$k_1$} satisfying the following two conditions: 
\begin{itemize}
  \item[(1)] $I_j^{k_1}$ contains at most one element of $Z$ for all $1\leq j\leq N^{k_1}$,
  \item[(2)] for every point $x\in Z\cap E$, there exists $1\leq j\leq N^{k_1}$ such that $x$ is the endpoint of $I_j^{k_1}$.
\end{itemize}
Then it is easy to see that for all $k>k_1$, every row of $\lowM_k$ has at least $N-1$ positive entries. 

Notice that $\mathcal {ZM}(\bfS,x)\leq N-2$ for all $x \in Z$.
Thus every column of $\lowM_k$ has at least $2$ positive entries. 
\end{proof}


\begin{lem}\label{lem:mk positive}
Under the assumptions of Lemma~\ref{lem:zero point separation}, for  all $k>k_1$, every row of $(\lowM_k)^k$ has at least $(N-1)^k$ positive entries and every column of $(\lowM_k)^k$ has at least $2^k$ positive entries. Here $k_1$ is the constant in Lemma~\ref{lem:zero point separation}.
\end{lem}
\begin{proof}


Fix $k>k_1$.
For all $m\geq 1$ and $1\leq i\leq N^k$, we define  
\[
   row_m(i) =\{j: \big((\lowM_k)^m\big)_{ij}>0\}.
\]

Notice that $\big((\lowM_k)^{m+1}\big)_{ij}=\sum_{t=1}^{N^k}(\lowM_k)_{it}\big( (\lowM_k)^m\big)_{tj}$ for all $m\geq 1$ and $1\leq i,j\leq N^k$. Thus for all $m\geq 1$ and $1\leq i\leq N^k$,
\[
  row_{m+1}(i)=\{j: \mbox{there exists } t\in row_1(i)\; \mbox{such that } j\in row_m(t) \}.
\]

It follows from the definition of $\lowM_k$ that for all $1\leq i\leq N$, $1\leq \ell\leq N^{k-1}$,
\begin{equation}\label{eq:row1}
  row_1((i-1)N^{k-1}+\ell) \subset \{(\ell-1)N+1,(\ell-1)N+2,\ldots,\ell N\}.
\end{equation}

We claim that for each $1\leq m\leq k-1$,
\[
   row_m((i-1)N^{k-m}+\ell) \subset \{(\ell-1)N^m+1,(\ell-1)N^m+2,\ldots, \ell N^m\}
\]
for all $1\leq i \leq N^m$ and $1\leq \ell\leq N^{k-m}$.

It follows from \eqref{eq:row1} that the claim holds for $m=1$. Assume that the claim holds for some $1\leq m\leq k-2$. Now given $1\leq i\leq N^{m+1}$ and $1\leq \ell\leq N^{k-(m+1)}$, we write $i'=(i-1)N^{k-(m+1)}+\ell$. If $j\in row_{m+1}(i')$, then there exists $t\in row_1(i')$ such that $j\in row_m(t)$. Notice that there exist unique integer pair $(i_1,i_2)$ with $1\leq i_1\leq N$ and $1\leq i_2\leq N^m$ such that $i=(i_1-1)N^m+i_2$. Thus
\[
  i'=(i_1-1)N^{k-1}+(i_2-1)N^{k-(m+1)}+\ell.
\]
Hence, from \eqref{eq:row1}, $(i_2-1) N^{k-m}+(\ell-1)N+1\leq t\leq (i_2-1)N^{k-m}+\ell N.$
Combining this with the inductive assumption, we have
$(\ell-1)N^{m+1}+1\leq j\leq \ell N^{m+1}$
so that the claim holds for $m+1$. This completes the proof of the claim.

It directly follows from the claim that for all $1\leq m\leq k-1$ and $1\leq i\leq N^k$, if $t_1\not=t_2\in row_1(i)$, then
$
  row_m(t_1)\cap row_m(t_2)=\emptyset,
$
which implies that
\begin{equation}\label{eq:row-recurrent}
  \card(row_{m+1}(i))=\sum_{t\in row_1(i)} \card(row_m(t)).
\end{equation}

From Lemma~\ref{lem:zero point separation}, $\card(row_1(i))\geq N-1$ for all $1\leq i\leq N^k$. Combining this with \eqref{eq:row-recurrent}, we can use inductive arguments to obtain that
$\card(row_m(i))\geq (N-1)^m$
for all $1\leq m\leq k$ and $1\leq i\leq N^k$. Thus every row of $(\lowM_k)^k$ has at least $(N-1)^k$ positive entries.

Similarly, for all $m\geq 1$ and $1\leq j\leq N^k$, we define
\[
  col_m(j) =\{i: \big( (\lowM_k)^m \big)_{ij}>0\}.
\]
Then for all $m\geq 1$ and $1\leq j\leq N^k$,
\[
  col_{m+1}(j)=\{i: \mbox{there exists } t\in col_1(j) \; \mbox{such that } i\in col_m(t) \}.
\]
By using similar arguments as above, we can obtain that for each $1\leq m\leq k-1$,
\[
  col_m((j-1)N^m+\ell)\subset \{j,j+N^{k-m},\ldots,j+(N^m-1)N^{k-m}\}
\]
for all $1\leq j\leq N^{k-m}$ and $1\leq \ell\leq N^m$. Hence, for all $1\leq m\leq k-1$ and $1\leq j\leq N^k$, if $t_1\not=t_2\in col_1(j)$, then
$col_m(t_1)\cap col_m(t_2)=\emptyset$.
As a result, we have $\card(col_m(j))\geq 2^m$ for all $1\leq m\leq k$ and $1\leq j\leq N^k$, which implies that every column of $(\lowM_k)^k$ has at least $2^k$ positive entries.
\end{proof}

The following result is part of the statement in \cite[8.5.P5]{HorJoh90}. We will use it to prove that $\lowM_k$ is primitive for sufficiently large $k$ under certain conditions.
\begin{lem}[\cite{HorJoh90}]\label{lem:prim-crit}
   Let $A=(a_{ij})_{n\times n}$ be an irreducible nonnegative matrix. Assume that at least one of its main diagonal entry $a_{ii} (1\leq i\leq n)$ is positive. Then $A$ is primitive.
\end{lem}

\begin{theo}\label{th:4-11}
Assume that $\mathcal {ZM}(\bfS) \leq N-2$ and the function $S_n$ has finitely many zero points for each $1 \leq n \leq N$. Then there exists $k_0 \in \bZp$ such that  $\lowM_k$ is primitive for all $k > k_0$.
\end{theo}
\begin{proof}


For every $1\leq n\leq N$ and $1\leq j\leq N^k$, we call $\underline{s}_{n,j}^k$ a \emph{basic entry} of the matrix $\lowM_k$. If all basic entries are positive, then using the same arguments in the proof of \cite[Lemma~3.2]{JiangRuan}, we can obtain that $(\lowM_k)^k>0$.  

In the case that $N=2$, we have $\mathcal {ZM}(\bfS) =0$ so that $\mathcal {ZM}(\bfS,x)=0$ for all $x\in I$. Thus $S_n(x)\not=0$ for $n=1,2$ and  all $x\in I$. It follows that all basic entries are positive so that $(\lowM_k)^k>0$. Hence $\lowM_k$ is primitive for all $k \in \mathbb{Z}^+$. 


Now we assume that $N\geq 3$.
Let $m_n$ be the number of zero points of $S_n$ on $I$. Write $m=\sum_{n=1}^{N}m_n$. Then for any $k\geq 1$, there are at most $2m$ basic entries equal to zero. Notice that for every $k\geq 1$ and $1\leq i,j\leq N^k$, the $(i,j)$ entry of $ (\lowM_k)^k$ is
\begin{equation}\label{eq:Thm4-11-1}
  \big((\lowM_k)^k\big)_{ij} = \sum_{1\leq t_2,\ldots,t_{k}\leq N^k\atop t_1=i,t_{k+1}=j} \prod_{\ell=1}^{k} (\lowM_k)_{t_\ell,t_{\ell+1}},
\end{equation}
and both every row and every column of $\lowM_k$ have $N$ basic entries. Hence, a zero basic entry of $\lowM_k$ can make at most $kN^{k-1}$ entries of $(\lowM_k)^k$ to be zero. Thus there are at most $2mkN^{k-1}$ zero entries in  $(\lowM_k)^k$.

Let $k_1$ be the constant in Lemma~\ref{lem:mk positive} and $k_0 =\max \{2,m, k_1\}$. We claim that $(\lowM_k)^k$ is irreducible for all $k>k_0$.

We prove the claim by contradiction. Assume  that $(\lowM_k)^k$ is reducible. Then there are nonempty and disjoint subsets $A,B$ of $\{1, \ldots, N^k\}$ satisfying $A \cup B =\{1, \ldots, N^k\}$, and for all $i\in A$ and $j \in B$, the $(i,j)$ entry of $(\lowM_k)^k$ is zero. From Lemma~\ref{lem:mk positive} and $N-1\geq 2$, there are at least $2^k$ elements in both $A$ and $B$. Hence 
\[
  \card(A)\cdot \card(B)=\card(A)\cdot (N^k-\card(B))\geq 2^k(N^k-2^k)
\]
so that $(\lowM_k)^k$ has at least $2^k(N^k-2^k)$ zero entries. From $N\geq 3$ and $k > k_0$, we have $2^k>k^2>mk$ and $N^k-2^k>2N^{k-1}$ so that $2^k(N^k-2^k) > 2mk N^{k-1}$, which is a contradiction. This completes the proof of the claim.

Now we will show that for all $k>k_0$, at least one of the main diagonal entry of $(\lowM_k)^k$ is positive, so that $(\lowM_k)^k$ is primitive by Lemma~\ref{lem:prim-crit}. As a result, $\lowM_k$ is primitive for all $k>k_0$.
  
For any $j \in \{1,2,\ldots,N^k\}$, there exists a unique finite sequence $j_1, \ldots, j_k \in \{1,\ldots,N\}$ such that 
\begin{equation*}
	j=(j_1-1)N^{k-1}+(j_2-1)N^{k-2}+\cdots+(j_{k-1}-1)N+j_k.
\end{equation*}
We define $\sigma(j)=(j-(j_1-1)N^{k-1})N+j_1$. Then $\sigma(j)\in\{1,2,\ldots,N^k\}$. Thus we can define $\sigma^p(j)=\sigma(\sigma^{p-1}(j))$ for $p\geq 2$. It is easy to see that $\sigma^k(j)=j$ and $(\lowM_k)_{j,\sigma(j)}$ is a basic entry of $\lowM_k$ for all $j\in \{1,\ldots,N^k\}$.

 
Write $\sigma^0(j)=j$.    
From \eqref{eq:Thm4-11-1}, $\big((\lowM_k)^k\big)_{jj}\geq\prod_{p=1}^{k} (\lowM_k)_{\sigma^{p-1}(j),\sigma^p(j)}$.
Hence, a zero basic entry of $\lowM_k$ can make at most $k$ main digonal entries of $(\lowM_k)^k$ to be zero. Thus there are at most $2mk$ zero main digonal entries in  $(\lowM_k)^k$.

Notice that $k_0\geq \max\{2,m\}$ and $N\geq 3$. Hence, for $k> k_0$, we have $N^k\geq 3^k>2k^2>2mk$ so that $(\lowM_k)^k$ contains at least one positive main diagonal entry.
\end{proof}

From Lemma~\ref{lem:gamma-1-p} and Theorems~\ref{thm:rho-upstar-exist}, \ref{thm:rho-lowstar-exist}, \ref{thm:rho-lowstar-equal} and \ref{th:4-11}, we know that Theorem~\ref{thm:Main-Results-1} holds.


\section{Proof of Theorem~\ref{thm:Main-Results-2}}
In the rest of the paper, we always assume that $f$ is a generalized affine FIFs satisfying conditions (A1)-(A3). 

\subsection{Box dimension estimate of the graph of continuous functions}
Given a bounded subset $E$ of $\mathbb{R}^d$, we use $\overline{\dim}_{B} E$ and $\underline{\dim}_{B} E$ to denote the upper box dimension and the lower box dimension of $E$, respectively. If $\overline{\dim}_{B} E=\underline{\dim}_{B} E$, then we use  $\dim_B E$ to denote the common value and call it the \emph{box dimension} of $E$.
It is well known that $\underline{\dim}_{B}E\geq 1$ when $E$ is the graph of a continuous function on a closed interval of $\mathbb{R}$. Please see ~\cite{Fal14} for details.

Let $g$ be a continuous function on  $J$. For any $U\subset J$, we use $O(g,U)$ to denote the oscillation of $g$ on $U$, that is,
\begin{equation*}
	O(g,U)= \sup\limits_{x',x'' \in U}|g({x}')-g({x}'')|.
\end{equation*}
Write
\begin{equation*}
	O_k(g,J)=\sum\limits_{j=1}^{N^k} O(g,J_j^k),
\end{equation*}
where $J_j^k$ is defined by \eqref{def:J-jk}. 

The following lemma presents a method to estimate the upper and lower box dimensions of the graph of a function by its oscillation. Similar results can be found in \cite{Fal14,KRZ18,RSY09}.

\begin{lem}[\cite{JiangRuan}]\label{lem:box-dim-new}
	Let $g$ be a continuous function on a closed interval $J$. Then
	\begin{align*}
		&\underline{\dim}_B \Gamma g \geq 1+\varliminf_{k\to\infty}\frac{\log \big( O_k(g,J)+1\big)}{k\log N}, 	\quad \mbox{and} \\
		&\overline{\dim}_B \Gamma g \leq 1+\varlimsup_{k\to\infty} \frac{\log \big(O_k(g,J)+1\big)}{k\log N}.
	\end{align*}
\end{lem}

We remark that $J=[0,1]$ in the original version of the above lemma in \cite{JiangRuan}. However, it is straightforward to see that the lemma still holds in the present version.  

It is clear that $\{O_k(g,J)\}_{k=1}^\infty$ is increasing with respect to $k$. Thus $\lim_{k\to \infty} O_k(g,J)$ always exists.
Write $\Var(g,J)$ the classical total variation of $g$ on $J$. We have the following simple fact.
\begin{lem}\label{lem:VarOinfty}
	Let $g$ be a continuous function on a closed interval $J=[a,b]$. Then $\lim_{k\to \infty} O_k(g,J)=\Var(g,J)$.
\end{lem}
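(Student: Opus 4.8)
The plan is to establish the two inequalities $\lim_{k\to\infty} O_k(g,J)\le \Var(g,J)$ and $\lim_{k\to\infty} O_k(g,J)\ge \Var(g,J)$ separately. Since $\{O_k(g,J)\}_k$ is already known to be increasing, its limit exists in $[0,+\infty]$, so it suffices to pin the value down from both sides. Throughout I would use that $g$, being continuous on the compact interval $J$, is uniformly continuous, and denote by $\omega$ its modulus of continuity, so that $O(g,U)\le \omega(\diam U)$ for every subinterval $U\subset J$.

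For the upper bound, I would observe that for any subinterval $U\subset J$ and any $x',x''\in U$ one has $|g(x')-g(x'')|\le \Var(g,U)$, whence $O(g,U)\le \Var(g,U)$. Applying this on each $J_j^k$ and using the additivity of the variation over the adjacent intervals $J_1^k,\ldots,J_{N^k}^k$, namely $\sum_{j=1}^{N^k}\Var(g,J_j^k)=\Var(g,J)$, gives $O_k(g,J)=\sum_{j=1}^{N^k}O(g,J_j^k)\le \Var(g,J)$ for every $k$, and therefore $\lim_{k\to\infty}O_k(g,J)\le \Var(g,J)$.

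For the lower bound, I would fix an arbitrary partition $Q:\, a=u_0<u_1<\cdots<u_p=b$ of $J$ and show that $\sum_{l=1}^p |g(u_l)-g(u_{l-1})|\le \lim_{k\to\infty}O_k(g,J)$; taking the supremum over all $Q$ then finishes the proof, and this also covers the case $\Var(g,J)=+\infty$, where $Q$ can be chosen to make the left side arbitrarily large. Take $k$ large enough that $N^{-k}(b-a)<\min_{1\le l\le p}(u_l-u_{l-1})$, so that consecutive points $u_{l-1},u_l$ lie in distinct subintervals $J_{j(l-1)}^k, J_{j(l)}^k$ with $j(l-1)<j(l)$. Decomposing $g(u_l)-g(u_{l-1})$ as a telescoping sum over the endpoints of the subintervals lying between $u_{l-1}$ and $u_l$, and bounding each difference by the oscillation on the corresponding subinterval, yields $|g(u_l)-g(u_{l-1})|\le O(g,J_{j(l-1)}^k)+\sum_{j(l-1)<j<j(l)}O(g,J_j^k)+O(g,J_{j(l)}^k)$. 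Summing over $l$, every subinterval is counted exactly once except the $p-1$ subintervals containing the interior points $u_1,\ldots,u_{p-1}$, which are counted twice, so $\sum_{l=1}^p|g(u_l)-g(u_{l-1})|\le O_k(g,J)+\sum_{l=1}^{p-1}O(g,J_{j(l)}^k)\le O_k(g,J)+(p-1)\,\omega\big(N^{-k}(b-a)\big)$. Letting $k\to\infty$, the error term vanishes by uniform continuity, giving $\sum_{l=1}^p|g(u_l)-g(u_{l-1})|\le \lim_{k\to\infty}O_k(g,J)$.

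The upper bound is routine; the step requiring the most care is the telescoping estimate in the lower bound, where one must correctly account for the double counting of the (at most $p-1$) subintervals containing interior partition points and then control the resulting error through uniform continuity. Everything else is bookkeeping with the additivity of variation and the already-noted monotonicity of $O_k(g,J)$.
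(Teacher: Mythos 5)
Your proof is correct and follows essentially the same route as the paper's: the upper bound via $O(g,J_j^k)\le\Var(g,J_j^k)$ and additivity of variation, and the lower bound by fixing a partition, taking $k$ large enough to separate the partition points into distinct subintervals, telescoping, and absorbing the $p-1$ doubly counted oscillation terms using (uniform) continuity. Your explicit use of the modulus of continuity to kill the error term as $k\to\infty$ is a slightly cleaner way of phrasing the paper's final limiting step, but the argument is the same.
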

\begin{proof}
	Clearly, $O_k(g,J)\leq \Var(g,J)$ for all $k\in \bZp$. Thus $\lim_{k\to \infty} O_k(g,J)\leq \Var(g,J)$. Now we prove the another inequality. 
	
	Arbitrarily pick a partition $T=\{a=t_0<t_1<\cdots<t_n=b\}$ of $J$. Fix $k\in \bZp$ large enough such that $N^{-k}<\min\{t_{i}-t_{i-1}:\, 1\leq i\leq n\}$. For every $0\leq i\leq n$, there exists $\ga_i\in\{1,\ldots,N^k\}$ such that $t_i\in J_{\ga_i}^k$. Furthermore, it is easy to see that $1=\ga_0<\ga_1<\cdots<\ga_n=N^k$. Notice that for any $1\leq i\leq n$,
	\[
	|g(t_i)-g(t_{i-1})|\leq \sum_{p=\ga_{i-1}}^{\ga_i} O(g,J_p^k).
	\]
	Thus
	\begin{align*}
		\sum_{i=1}^{n}|g(t_i)-g(t_{i-1})| &\leq \sum_{i=1}^{n}\sum_{p=\ga_{i-1}}^{\ga_i} O(g,J_p^k)\\
		&=  O_k(g,J)+\sum_{i=1}^{n-1} O(g,J_{\ga_i}^k)\leq \lim_{k\to \infty}O_k(g,J)+\sum_{i=1}^{n-1} O(g,J_{\ga_i}^k).
	\end{align*}
	Since $g$ is continuous on $I$, we can choose $k$ large enough such that $\sum_{i=1}^{n-1} O(g,J_{\ga_i}^k)$ as small as possible.  Hence
	\[
	\sum_{i=1}^{n}|g(t_i)-g(t_{i-1})| \leq \lim_{k\to \infty}O_k(g,J).
	\]
	By the arbitrariness of the partition $T$, $\Var(g,J)\leq \lim_{k\to \infty}O_k(g,J)$.
\end{proof}

\subsection{Estimate of oscillations}

By the definition of $W_n$, it is easy to see that $W_n(x,y)=(L_n(x),F_n(x,y)$, where
\[
  L_n(x)=(x-x_0)/N+x_{n-1}, \quad F_n(x,y)=S_n(x)y+q_n(x).
\]
From  \eqref{eq:1-5}, $W_n(x,f(x))=(L_n(x),f(L_n(x)))$. Thus, we have the following useful equality:
\begin{equation}\label{eq:FIF-Rec-Expression}
	f(L_n(x))=S_n(x)f(x)+q_n(x), \quad x\in [x_0,x_N], \; n=1,2,\ldots,N.
\end{equation}

Write $M_f=\max_{x \in I}|f(x)|$.
By using the similar arguments in the proof of \cite[Lemma~4.2]{JiangRuan}, we can obtain the following lemma. 


\begin{lem}\label{lem:osci-1}
	For any $1\leq n\leq N$ and $D \subset I$,
		\begin{align*}
			O(f,L_n(D))	&\leq  \sup_{x \in D}\big|S_n(x)\big| O(f,D)+	M_f O(S_n,D)	+ O(q_n,D) , \quad \mbox{and}\\
			O(f,L_n(D))& \geq  \inf_{x \in D}\big|S_n(x)\big| O(f,D)-	M_f O(S_n,D)	- O(q_n,D) .
		\end{align*}
\end{lem}

\begin{proof}
		From \eqref{eq:FIF-Rec-Expression},
		\begin{align*}
			O(f,L_n(D))
			=&\sup_{x',x'' \in D}\big|S_n(x')f(x')-S_n(x'')f(x'') + q_n(x')-q_n(x'')\big|   \\
			\leq & \sup_{x',x'' \in D}\big|S_n(x') \big( f(x')-f(x'')\big) \big| +
			\sup_{x',x'' \in D}\big|f(x'') \big(S_n(x')-S_n(x'')\big) \big| \\
			&+ \sup_{x',x'' \in D}\big| q_n(x')-q_n(x'')\big| \\
			\leq & \sup_{x \in D}\big|S_n(x)\big| O(f,D)+	M_f O(S_n,D)	+ O(q_n,D) .
		\end{align*}
		On the other hand, we choose $x',x'' \in  D$ such that $O(f,D)=\big|f(x')-f(x'')\big|$. Then
	\begin{align*}
		&O(f,L_n(D)) \\
		\geq & |f(L_n(x'))-f(L_n(x''))|\\
		\geq & \big| S_n(x')\big(f(x')-f(x'')\big)   \big| -\big|f(x'') \big(S_n(x')-S_n(x'')\big) \big| -\big| q_n(x')-q_n(x'')\big| \\
		\geq & \inf_{x \in D}\big|S_n(x)\big| O(f,D)-	M_f O(S_n,D)	- O(q_n,D).
	\end{align*}
	Thus the lemma holds.
\end{proof}

Using the argument similar to the proof of the first part of the above lemma, we have the following result.
\begin{lem}\label{lem:osci-2}
	For any $1\leq n\leq N$, $D \subset I$ and $t \in D$,
	\begin{align*}
		\big|O(f,L_n(D))	- |S_n(t)| O(f,D)   \big|\leq  2	M_f O(S_n,D)	+ O(q_n,D)  .
	\end{align*}
\end{lem}
\begin{proof}
	For any $x',x''\in D$, we have 
	\[
	  \big|S_n(x')f(x')-S_n(x'')f(x'') - S_n(t)(f(x')-f(x''))\big| \leq 2M_fO(S_n,D). 
	\]
	Thus
	\begin{align*}
		|S_n(t)|O(f,D)-2M_f O(S_n,D) 
		\leq  &\big|S_n(x')f(x')-S_n(x'')f(x'')| \\
		\leq &|S_n(t)|O(f,D)+2M_f O(S_n,D)
	\end{align*}
	so that the lemma holds.
\end{proof}

Given $k,p \in \mathbb{Z}^+$ and $g \in C(I)$, we define
\begin{equation*}
	V(g,k,p)=\big(O_p(g,I_1^k),O_p(g,I_2^k),\ldots,O_p(g,I_{N^k}^k)\big)^T \in \mathbb{R}^{N^k},
\end{equation*}
and call it an \emph{oscillation vector} of $g$ with respect to $(k,p)$.
It is obvious that
\begin{equation*}
	O_{k+p}(g,I)={\lVert{V}(g,k,p)\rVert}_1,
\end{equation*}
where $\lVert v\rVert_1:=\sum_{i=1}^d |v_i|$ for any $v=(v_1,\ldots,v_d)\in \bR^d$.



Define a vectors $\xi_k$ in $\bR^{N^k}$ by 
\begin{equation}\label{eq:u-beta-bfq-k-def}
	(\xi_{k})_{(n-1)N^{k-1}+\ell}= M_f \Var(S_n, I_{\ell}^{k-1})+\Var(q_n, I_{\ell}^{k-1}),  
\end{equation}
where $1\leq n\leq N, 1\leq \ell \leq N^{k-1}$.

\begin{lem}\label{lem:2-3}
	For any $k \in \mathbb{Z}^+$ and any $p \in \bZp$, 
	\begin{equation}\label{eq:4-3}
		-\xi_{k}+\lowM_k V(f,k,p)\leq V(f,k,p+1)\leq \xi_{k}+\upM_k V(f,k,p).
	\end{equation}
\end{lem}
\begin{proof}
	From Lemma~\ref{lem:osci-1}, for any $1\leq n\leq N$, $k\in \bZp$, $1\leq j\leq N^k$ and any $D\subset I_j^k$,
	\[
	O(f,L_n(D))\leq \overline{s}_{n,j}^k O(f,D) + O(q_n, D) + M_f O(S_n,D) .
	\]
	Notice that $\big(L_n(I_j^k)\big)_m^p = L_n\big((I_j^k)_m^p\big)$ for $1\leq m\leq N^p$. Thus,
	\begin{align*}
		O_p(f,L_n(I_{j}^k))&=\sum_{m=1}^{N^p} O\Big(f,\big(L_n(I_{j}^k)\big)_m^p\Big) \\
		&\leq \sum_{m=1}^{N^p}\Big( \overline{s}_{n,j}^k O(f,(I_{j}^k)_m^p)) +O(q_n, (I_j^k)_{m}^p) +M_f O(S_n,(I_{j}^k)_m^p)\Big)\\
		&=\overline{s}_{n,j}^k O_p(f,I_j^k)+ O_p(q_n,I_j^k)+ M_f O_p(S_n, I_j^k) .
	\end{align*}
	Hence, from
	$I_{\ell}^{k-1}=\bigcup_{j=(\ell-1)N+1}^{\ell N} I_{j}^k,$

	\begin{align*}
		O_{p+1}(f,L_n(I_{\ell}^{k-1}))
		&=\sum_{j=(\ell-1)N+1}^{\ell N} O_p(f,L_n(I_{j}^k)) \\
		&\leq \sum_{j=(\ell-1)N+1}^{\ell N} \Big( \overline{s}_{n,j}^{k}	 O_p(f,I_{j}^{k}) +O_p(q_n,I_{j}^{k})+M_f O_p(S_n,I_{j}^{k}) \Big) \\
		&\leq M_f O_{p+1}(S_n,I_{\ell}^{k-1})+O_{p+1}(q_n,I_{\ell}^{k-1})+\sum_{j=(\ell-1)N+1}^{\ell N} \overline{s}_{n,j}^k O_p(f,I_j^k).
	\end{align*}
	By the definitions of $\xi_k$ and $\upM_k$, we can rewrite this inequality as
	\[
	  O_{p+1}(f,I_{(n-1)N^{k-1}+\ell}^k) \leq (\xi_k)_{(n-1)N^{k-1}+\ell}+\big(\upM_{k} V(f,k,p)\big)_{(n-1)N^{k-1}+\ell}
	\]
	so that  $V(f,k,p+1)\leq \xi_{k}+\upM_k V(f,k,p)$.  Similarly, we can prove that another inequality in \eqref{eq:4-3} holds.
\end{proof}


\subsection{Estimate the box dimension of  $\Gamma f$ by $\rho_*$ and $\rho^*$}

\begin{theo}\label{th:upper-Bdim}
Assume that the function $S_n$ is not identically zero on every subinterval of $I$ for all $1 \leq n \leq N$.
Then
\begin{equation}\label{eq:upBoxFormula}
	\overline{\dim}_{B} \Gamma f  \leq  \max\big\{1,1+\log_N  \rho^*\big\}.
\end{equation}

\end{theo}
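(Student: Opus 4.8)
The plan is to bootstrap the recursive oscillation estimate \eqref{eq:V-fkp-up-RecEst} through $k$ iterations so that the relevant growth rate becomes governed by $(\upM_k)^k$, then pass to spectral radii and take $k\to\infty$. The starting point is the vector inequality $V(f,k,p+1)\leq u_{\beta,\bfq,k}+\upM_k V(f,k,p)$. Since $\upM_k$ is nonnegative, I can iterate this: applying it $p$ times yields
\begin{equation*}
  V(f,k,p)\leq (\upM_k)^p V(f,k,0) + \sum_{\ell=0}^{p-1}(\upM_k)^\ell u_{\beta,\bfq,k}.
\end{equation*}
Taking the $\ell^1$-norm and using $O_{k+p}(f,I)=\lVert V(f,k,p)\rVert_1$, the asymptotic growth of $O_{k+p}(f,I)$ in $p$ is controlled by $\rho(\upM_k)$, provided that error term stays subordinate. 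The hypothesis that each $S_i$ is not identically zero on any subinterval lets me invoke Lemma~\ref{lem:primitive}, giving $(\upM_k)^k>0$, hence $\upM_k$ primitive and irreducible; by the Perron--Frobenius Theorem (Lemma~\ref{th:PF}), $\upM_k$ then has a strictly positive eigenvector $w$ for $\rho(\upM_k)$, which is exactly the tool needed to convert the $\ell^1$ bound into a clean scalar recursion.

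First I would fix $k$ and work with the Perron eigenvector $w>0$ of $\upM_k$, writing $\lambda_k=\rho(\upM_k)$. The idea is to compare $V(f,k,p)$ against a multiple of $w$. Because $w$ has strictly positive entries, there is a constant $C$ with $u_{\beta,\bfq,k}\leq C(\lambda_k-1)w$ when $\lambda_k>1$ (and an analogous absorbing trick when $\lambda_k\leq 1$), so that the affine recursion $V(f,k,p+1)\leq u_{\beta,\bfq,k}+\upM_k V(f,k,p)$ collapses, via the invariance $\upM_k w=\lambda_k w$, into
\begin{equation*}
  V(f,k,p)+Cw\leq \lambda_k^{\,p}\bigl(V(f,k,0)+Cw\bigr),
\end{equation*}
exactly paralleling the scalar manipulation in the proof of Theorem~\ref{thm:upBdim-gammaUpstar}. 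Taking norms gives $O_{k+p}(f,I)\leq \lambda_k^{\,p}\cdot(\text{const depending on }k)$ in the case $\lambda_k>1$, and at most linear growth in $p$ when $\lambda_k\leq 1$. Feeding this into the upper estimate \eqref{eq:upper-boxdim-est-new} of Lemma~\ref{lem:box-dim-new} and letting $p\to\infty$ with $k$ fixed yields, for each fixed $k$,
\begin{equation*}
  \upBdim\Gamma f\leq \max\Bigl\{1,\,1+\frac{\log\rho(\upM_k)}{\log N}\Bigr\}.
\end{equation*}

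Finally I would let $k\to\infty$. By Theorem~\ref{thm:rho-upstar-exist} the sequence $\rho(\upM_k)$ is nonincreasing and converges to $\rho^*$, so the uniform bound above passes to the limit and gives $\upBdim\Gamma f\leq\max\{1,1+\log\rho^*/\log N\}$, which is \eqref{eq:upBoxFormula}. \textbf{The main obstacle} I anticipate is the bookkeeping that makes the per-$k$ estimate genuinely independent of $k$ in the right way: the constant absorbing $u_{\beta,\bfq,k}$ into the eigenvector depends on $k$ (both $\upM_k$ and $w$ change with $k$, and the entries of $u_{\beta,\bfq,k}$ shrink like $N^{-k}$ while $\Var(q_i,I_\ell^{k-1})$ redistributes), but this $k$-dependent constant only affects the multiplicative prefactor and not the exponential rate $\lambda_k$ in $p$; so it is harmless once the limit in $p$ is taken before the limit in $k$. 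Care is needed to ensure the case distinction $\lambda_k\gtrless 1$ is handled uniformly and that the $\varlimsup$ in \eqref{eq:upper-boxdim-est-new} is correctly identified with $\log\rho(\upM_k)/\log N$; this is the step that most directly reuses the machinery of Theorem~\ref{thm:upBdim-gammaUpstar} but now in vector form.
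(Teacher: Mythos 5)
Your proposal is correct and follows essentially the same route as the paper: both iterate the vector inequality \eqref{eq:V-fkp-up-RecEst}, invoke Lemma~\ref{lem:primitive} and the Perron--Frobenius theorem to obtain a strictly positive eigenvector dominating $u_{\beta,\bfq,k}$ and the initial oscillation vector, deduce $O_{k+p}(f,I)\lesssim p\,(\rho(\upM_k)^p+1)\lVert w_k\rVert_1$, apply Lemma~\ref{lem:box-dim-new} with $p\to\infty$ for fixed $k$, and finish with the monotone limit from Theorem~\ref{thm:rho-upstar-exist}. The only cosmetic difference is that the paper avoids your case split on $\lambda_k\gtrless 1$ by simply bounding the iterated sum by $\sum_{n=0}^{p-1}\rho(\upM_k)^n w_k$ and then by $p(\rho(\upM_k)^p+1)w_k$, which handles both cases at once.
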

\begin{proof}
Fix $k\in \bZp$.
Let $\xi_{k}$ be the vector in $\bR^{N^k}$ defined by \eqref{eq:u-beta-bfq-k-def}. From Lemma~\ref{lem:primitive},  $\upM_k$ is primitive so that it is irreducible.
By Lemma~\ref{th:PF}, we can choose a positive eigenvector $w_k$ of $\upM_k$ such that $w_k\geq \xi_k$ and $w_k\geq V(f,p,1)$. Hence, from Theorem~\ref{lem:2-3}, we have
\begin{align*}
	V(f,k,p+1)\leq w_k+\upM_k V(f,k,p)
\end{align*}
for all $p\in \bZp$. Thus,
\begin{align*}\label{eq:upper-key-con}
	V(f,k,p) &\leq w_k+ \upM_k w_k+\cdots+(\upM_k)^{p-2}w_k+(\upM_k)^{p-1} V(f,k,1) \\
             &\leq \sum_{\ell=0}^{p-1}\rho(\upM_k)^{\ell} w_k
\end{align*}
for all $p\in \bZ^+$. It follows that
\begin{equation*}
  O_{k+p}(f,I)=||V(f,k,p)||_1\leq ||w_k||_1 \sum_{\ell=0}^{p-1} (\rho(\upM_k))^{\ell}\leq ||w_k||_1 p \big(\rho(\upM_k)^p+1\big).
\end{equation*}
Hence,
\begin{align*}
   \varlimsup_{p\to\infty} \frac{\log (O_{k+p}(f,I)+1)}{p\log N} 
    \leq \max \Big\{ 0, \frac{\log \rho (\upM_k)}{\log N}  \Big\}.
\end{align*}
Thus, from Lemma~\ref{lem:box-dim-new},
\begin{align*}
    \overline{\dim}_B \Gamma f \leq 1+\varlimsup_{p\to\infty} \frac{\log (O_{k+p}(f,I)+1)}{p\log N} 
    \leq \max \Big\{ 1, 1+\frac{\log \rho (\upM_k)}{\log N}  \Big\}.
\end{align*}

By the arbitrariness of $k$, we know from Theorem~\ref{thm:rho-upstar-exist} that \eqref{eq:upBoxFormula} holds.
\end{proof}

\begin{theo}\label{th:lower-Bdim}

Assume that $\Var(f,I)=\infty$, $\mathcal {ZM}(\bfS) \leq N-2$ and the function $S_n$ has finitely many zero points on $I$ for all $1 \leq n \leq N$. Then
\begin{align}\label{eq:thm-lowBdim}
	\lowBdim \Gamma f	\geq 1+\log_N \rho_*.
\end{align}

\end{theo}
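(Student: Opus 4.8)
The plan is to mirror the upper-bound argument of Theorem~\ref{th:upper-Bdim}, but to extract geometric \emph{growth} instead of decay, and to dualize: rather than the right Perron eigenvector of $\upM_k$, I will pair the recursion with the \emph{left} Perron eigenvector of $\lowM_k$. The overall scheme is to prove, for every sufficiently large $k$, the intermediate bound
\[
   \lowBdim \Gamma f \geq 1+\frac{\log \rho(\lowM_k)}{\log N},
\]
and then let $k\to\infty$, invoking Theorem~\ref{thm:rho-lowstar-exist} (which gives $\rho(\lowM_k)\uparrow\rho_*$) to conclude \eqref{eq:thm-lowBdim}. Since $\lowBdim \Gamma f\geq 1$ always holds for a continuous $f$, the case $\rho_*\leq 1$ is immediate; so I will assume $\rho_*>1$, in which case $\rho(\lowM_k)>1$ for all large $k$ by monotone convergence.

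Fix $k$ large enough that $\lowM_k$ is primitive (Theorem~\ref{th:4-11}) and $\rho_k:=\rho(\lowM_k)>1$. Since $\lowM_k$ is primitive, so is its transpose, and applying the Perron--Frobenius Theorem (Lemma~\ref{th:PF}) to $\lowM_k^T$ furnishes a strictly positive left eigenvector $\phi_k>0$ with $\phi_k^T\lowM_k=\rho_k\phi_k^T$. Set $a_p=\phi_k^T V(f,k,p)$. Pairing the recursion \eqref{eq:V-fkp-low-RecEst} on the left with the nonnegative row vector $\phi_k^T$ and using $u_{\beta,\bfq,k}\geq 0$ (see \eqref{eq:u-beta-bfq-k-def}) yields the scalar recursion
\[
   a_{p+1}\geq \rho_k a_p-c,\qquad c:=\phi_k^T u_{\beta,\bfq,k}\geq 0.
\]
Because $\rho_k>1$, this rewrites as $a_{p+1}-\frac{c}{\rho_k-1}\geq \rho_k\big(a_p-\frac{c}{\rho_k-1}\big)$.

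The next step is to ignite the geometric growth. Since $V(f,k,p)\geq 0$ and $\phi_k>0$, I have $a_p\geq(\min_j(\phi_k)_j)\,\|V(f,k,p)\|_1=(\min_j(\phi_k)_j)\,O_{k+p}(f,I)$, and by Lemma~\ref{lem:VarOinfty} together with the hypothesis $\Var(f,I)=\infty$ we get $O_{k+p}(f,I)\to\infty$ as $p\to\infty$. Hence there is $p_0$ with $a_{p_0}>c/(\rho_k-1)$, and iterating the displayed inequality gives $a_p-\frac{c}{\rho_k-1}\geq \rho_k^{\,p-p_0}\big(a_{p_0}-\frac{c}{\rho_k-1}\big)$ for $p\geq p_0$, i.e. $a_p\geq C\rho_k^{\,p}$ for a constant $C>0$ depending on $k$. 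Converting back through $a_p\leq\|\phi_k\|_\infty\,\|V(f,k,p)\|_1=\|\phi_k\|_\infty\,O_{k+p}(f,I)$ gives $O_{k+p}(f,I)\geq C'\rho_k^{\,p}$. Feeding this into Lemma~\ref{lem:box-dim-new} with $n=k+p$, the ratio $\frac{\log(O_n(f,I)+1)}{n\log N}$ has $\varliminf$ at least $\frac{\log\rho_k}{\log N}$, which delivers the intermediate bound; letting $k\to\infty$ then finishes the proof.

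The main obstacle is precisely the existence of a \emph{strictly positive} left eigenvector, which requires the irreducibility (indeed primitivity) of $\lowM_k$; this is exactly what Theorem~\ref{th:4-11} supplies, and is the reason the hypotheses $\gamma_*\geq 1$ and the finiteness of the zero sets appear. I want to stress why the left eigenvector is the correct object: the naive analogue of the upper-bound computation via a right eigenvector would demand a uniform \emph{componentwise} lower bound on $V(f,k,p)$, which $\Var(f,I)=\infty$ does not directly provide (individual oscillation components may vanish), whereas pairing with $\phi_k^T$ only needs the $\ell^1$-norm $\|V(f,k,p)\|_1=O_{k+p}(f,I)$ to blow up, which is exactly the content of Lemma~\ref{lem:VarOinfty}.
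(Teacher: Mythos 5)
Your proof is correct, but it takes a genuinely different (dual) route from the paper's. The paper works with a \emph{right} Perron eigenvector $w_k>0$ of $\lowM_k$ and therefore needs a \emph{componentwise} lower bound on $V(f,k,p)$ to start the iteration; it manufactures this by invoking primitivity in an essential quantitative way — choosing $n_k$ with $(\lowM_k)^{n_k}>0$, noting that the maximal entry of $V(f,k,p)$ is at least $N^{-k}\lVert V(f,k,p)\rVert_1$, and using the blow-up of $O_{k+p}(f,I)$ to force $(\lowM_k)^{n_k}V(f,k,p_*)\geq w_k+\sum_{\ell}(\lowM_k)^{\ell}\xi_k$, after which it runs an induction $V(f,k,p_k+n)\geq \tau^n w_k$ for an auxiliary $\tau<\rho(\lowM_k)$ and lets $\tau\to\rho(\lowM_k)$ at the end. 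Your pairing with the \emph{left} Perron eigenvector $\phi_k^T$ collapses the vector recursion to the scalar one $a_{p+1}\geq\rho(\lowM_k)a_p-c$, for which the $\ell^1$ blow-up supplied by Lemma~\ref{lem:VarOinfty} ignites the geometric growth directly; this avoids both the auxiliary parameter $\tau$ and the matrix power $(\lowM_k)^{n_k}$, and in fact needs only irreducibility of $\lowM_k$ (for strict positivity of $\phi_k$) rather than the full strength of primitivity from Theorem~\ref{th:4-11}. Your diagnosis of why the right-eigenvector route is harder here — individual components of $V(f,k,p)$ need not grow, only the $\ell^1$ norm does — is exactly the difficulty the paper's $(\lowM_k)^{n_k}>0$ step is designed to overcome. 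Both arguments rest on the same external inputs (Theorem~\ref{th:4-11}, the recursion \eqref{eq:V-fkp-low-RecEst}, Lemmas~\ref{lem:VarOinfty} and \ref{lem:box-dim-new}, and Theorem~\ref{thm:rho-lowstar-exist} to pass to the limit in $k$), and both correctly reduce the case $\rho_*\leq 1$ to the trivial bound $\lowBdim\Gamma f\geq 1$.
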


\begin{proof}
Notice that $\lowBdim \Gamma f	\geq 1$ always holds. Thus, without loss of the generality, we may assume that $\rho_*> 1$. From 
$\rho_*=\lim_{k\to \infty}\rho(\lowM_k)$, there exists a positivie integer $k_2$, such that $\rho(\lowM_k)>1$ for all $k>k_2$.
Let $k_0$ be the constant in Theorem~\ref{th:4-11}. From Theorem~~\ref{th:4-11}, $\lowM_k$ is primitive so that it is irreducible for all $k>k_0$.

Fix $k >\max\{k_0,k_2\}$. 
Given $1<\tau<\rho(\lowM_k)$, from Lemma~\ref{th:PF}, we can find a positive eigenvector $w_k$ of $\lowM_k$ with eigenvalue $\rho(\lowM_k)$ such that $w_k \geq \xi_k/(\rho(\lowM_k)-\tau)$.
Since $\lowM_k$ is primitive, there exists $\ell_k\in \bZp$ such that $\big(\lowM_k\big)^{\ell_k}>0$. Let $\alpha_k$ be the minimal entry of the matrix $\big(\lowM_k\big)^{\ell_k}$. Then $\alpha_k>0$.
From Theorem~\ref{lem:2-3},
\begin{equation}\label{eq:lower-connect}
	V(f,k,p+1)\geq \lowM_k V(f,k,p) -\xi_k
\end{equation}
for all $p\in \bZp$. Repeatedly using this inequality, we can obtain that for all $p\in \bZp$,
\begin{equation}\label{eq:5-3}
	V(f,k,p+\ell_k)\geq (\underline{M}_k)^{\ell_k} V(f,k,p) -\sum_{q=0}^{\ell_k-1} (\underline{M}_k)^{q} \xi_k.
\end{equation}
Notice that the maximal entry of $V(f,k,p)$ is at least $N^{-k}\lVert V(f,k,p)\rVert_1$. Thus,
\begin{equation*}
  (\underline{M}_k)^{\ell_k} V(f,k,p) \geq (\alpha_k',\ldots,\alpha_k'),
\end{equation*}
where $\alpha_k'=\alpha_kN^{-k}\lVert V(f,k,p)\rVert_1$. Notice that
\[
  \lim_{p\to \infty} \lVert V(f,k,p)\rVert_1=\lim_{p\to \infty} O_{k+p}(f,I)=\Var(f,I)=\infty.
\]
Hence, we can choose $p_*$ large enough such that
\[
  (\underline{M}_k)^{\ell_k} V(f,k,p_*) \geq w_k+\sum_{q=0}^{\ell_k-1} (\underline{M}_k)^{q} \xi_k.
\]
Let $p_k=p_*+\ell_k$. Then from \eqref{eq:5-3},
\[
  V(f,k,p_k)\geq w_k\geq  \frac{1}{\rho(\lowM_k)-\tau} \xi_k.
\]

From \eqref{eq:lower-connect},
\[
  V(f,k,p_k+1)\geq\rho(\lowM_k)w_k-\xi_k\geq \rho(\lowM_k)w_k-(\rho(\lowM_k)-\tau)w_k=\tau w_k.
\]
Notice that for all $\ell\in \bZ^+$,
\begin{align*}
  \rho(\lowM_k)\tau^\ell w_k-\xi_k &= \rho(\lowM_k)\big(\tau^\ell-1\big)w_k+\rho(\lowM_k)w_k-\xi_k \\
  & \geq \tau\big(\tau^{\ell}-1\big)w_k+\tau w_k=\tau^{\ell+1}w_k.
\end{align*}
Thus, by induction, $V(f,k,p_k+\ell) \geq \tau^{\ell} w_k$ for all $\ell\in \bZ^+$.
Hence
\[
	O_{k+p_k+\ell}(f,I) = \lVert V(f,k,p_k+\ell)\rVert_1  \geq \tau^{\ell} \lVert w_k\rVert_1,
\]
which implies that
\begin{equation*}
	\varliminf_{\ell \to \infty}{\frac{\log \big(O_{\ell}(f,I)+1\big)}{\ell \log N}}
   =\varliminf_{\ell \to \infty}{\frac{\log \big(O_{k+p_k+\ell}(f,I)+1\big)}{\ell \log N}}
	\geq \frac{\log \tau}{\log N} .
\end{equation*}
It follows from the arbitrariness of $\tau$ that $\log  \rho(\lowM_k) /\log N$ is less than the left hand side of this inequality.
Combining this with Lemma~\ref{lem:box-dim-new}, we have 
\[
  \lowBdim \Gamma f\geq 1+\frac{\log  \rho(\lowM_k)}{\log N}.
\] 
Since this result holds for all $k>\max\{k_0,k_2\}$, we know from Theorem~\ref{thm:rho-lowstar-exist} that
\eqref{eq:thm-lowBdim} holds.
\end{proof}

\begin{rem}
  From the proof of the above theorem, it is easy to see that under the assumptions of the theorem, $\Var(f,I_j^k)=\infty$ for any $k\in\bZp$ and $1\leq j\leq N^k$.
\end{rem}

\begin{theo}\label{thm-bdim}
	Under the assumption of Theorem~\ref{th:lower-Bdim} and the additional assumption that $\rho_*=\rho^*$, if $\Var(f,I)=\infty$ and $\rho_{\bfS}>1$, then
\begin{equation}\label{eq:thm-Bdim}
	 \Bdim \Gamma f=1+\log_N  \rho_{\bfS},
\end{equation}
otherwise $\Bdim  \Gamma f=1$.
\end{theo}
\begin{proof}
In the case that $\Var(f,I)<\infty$, we know from Lemma~\ref{lem:box-dim-new} that $\upBdim \Gamma f\leq 1$.
In the case that $\rho_{\bfS}\leq 1$, we know from Theorem~\ref{th:upper-Bdim} that $\upBdim \Gamma f\leq 1$. Since $\lowBdim \Gamma f \geq 1$ always holds, $\Bdim  \Gamma f=1$ if $\Var(f,I)<\infty$ or $\rho_{\bfS}\leq 1$.

In the case that $\Var(f,I)=\infty$ and $\rho_{\bfS}>1$, we know from Theorems~\ref{th:upper-Bdim} and \ref{th:lower-Bdim} that \eqref{eq:thm-Bdim} holds.
\end{proof}

From Lemma~\ref{lem:gamma-1-p} and Theorems~\ref{th:upper-Bdim}, \ref{th:lower-Bdim} and \ref{thm-bdim}, we know that Theorem~\ref{thm:Main-Results-2} holds.
Furthermore, from Theorems~\ref{thm:rho-lowstar-equal} and \ref{thm-bdim}, we have the following result.

\begin{cor}\label{cor:5-7}
Assume that the function $|S_n|$ is positive on $I$ for each $1 \leq n \leq N$.
Then in the case that $\Var(f,I)=\infty$ and $\rho_{\bfS}>1$,
\eqref{eq:thm-Bdim} holds,
otherwise $\Bdim  \Gamma f=1$.
\end{cor}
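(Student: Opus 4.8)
The plan is to check that the positivity of each $|S_i|$ forces the two hypotheses of Theorem~\ref{thm-bdim}---namely $\rho^*=\rho_*$ and that every $S_i$ has only finitely many zero points on $I$---and then to quote that theorem verbatim. The second hypothesis is immediate: if $|S_i|$ is positive on $I$, then $S_i$ has \emph{no} zero points at all, so in particular finitely many. Hence the entire content of the corollary reduces to the identity $\rho^*=\rho_*$, after which the conclusion is exactly Theorem~\ref{thm-bdim}.

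To establish $\rho^*=\rho_*$ I would follow \cite[Proposition~3.5]{JiangRuan}. Since each $|S_i|$ is continuous and positive on the compact interval $I$, there is a constant $s_*>0$ with $|S_i(x)|\geq s_*$ for all $x\in I$ and all $i$; in particular every basic entry satisfies $\underline{s}_{i,j}^k\geq s_*$. Moreover, each $|S_i|$ inherits the Lipschitz constant $\lambda_{\bfS}$ of condition (A2), so on each interval $I_j^k$ of length $N^{-k}|I|$ the oscillation of $|S_i|$ is at most $\lambda_{\bfS}N^{-k}|I|$. Setting $\theta_k:=1+\lambda_{\bfS}N^{-k}|I|/s_*$, the lower bound $\underline{s}_{i,j}^k\geq s_*$ then yields
\[
  \overline{s}_{i,j}^k\leq \underline{s}_{i,j}^k+\lambda_{\bfS}N^{-k}|I|\leq \theta_k\,\underline{s}_{i,j}^k,
\]
with $\theta_k\to 1$ as $k\to\infty$.

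The next step exploits the fact that $\upM_k$ and $\lowM_k$ share the same zero--nonzero pattern (both are governed by the rule $(\ell-1)N<j\leq\ell N$ in \eqref{eq:Mk-def}), so the entrywise estimate above upgrades to $\upM_k\leq \theta_k\lowM_k$. As both matrices are nonnegative, monotonicity of the spectral radius (Lemma~\ref{th:PF}(3), or the general bound for nonnegative matrices) gives
\[
  \rho(\upM_k)\leq \rho(\theta_k\lowM_k)=\theta_k\,\rho(\lowM_k).
\]
Letting $k\to\infty$ and using $\theta_k\to1$ together with the convergence of the two sequences of spectral radii from Theorems~\ref{thm:rho-upstar-exist} and \ref{thm:rho-lowstar-exist} gives $\rho^*\leq\rho_*$; combined with the always-valid inequality $\rho_*\leq\rho^*$, this forces $\rho^*=\rho_*=:\rho_{\bfS}$. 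Both hypotheses of Theorem~\ref{thm-bdim} now hold, and the corollary follows at once.

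The only delicate point---the main obstacle---is the uniform comparison $\upM_k\leq\theta_k\lowM_k$ with $\theta_k\to1$. It rests essentially on the \emph{uniform} positive lower bound $s_*$, which is precisely what positivity of $|S_i|$ on the compact $I$ supplies and which would break down if $|S_i|$ were merely permitted finitely many zeros. Everything else is routine bookkeeping with the monotone convergence of the spectral radii already proved in Section~4.
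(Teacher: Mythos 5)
Your proposal is correct and follows essentially the same route as the paper: the paper reduces the corollary to Theorem~\ref{thm-bdim} by invoking \cite[Proposition~3.5]{JiangRuan} for the identity $\rho^*=\rho_*$ when every $|S_i|$ is positive, and your argument (uniform lower bound $s_*$, Lipschitz oscillation bound giving $\upM_k\leq\theta_k\lowM_k$ with $\theta_k\to1$, then monotonicity of the spectral radius) is precisely the content of that cited proposition. No gaps.
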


\section{Estimate the box dimension of FIFs by $\gamma^*$ and $\gamma_*$}

In this section, we will estimate the box dimension of $\Gamma f$ by the sum function of vertical scaling functions.
By Lemma~\ref{lem:osci-2}, we can obtain the following result.



\begin{lem}\label{lem:Osci-rec-est}
	For all $k\in \bZp$,
	\begin{align}
		& O_{k+1}(f,I)\leq \gamma^* \cdot O_k(f,I)+\sum_{n=1}^{N} \big(\Var(q_n,I)+2M_f \Var(S_n,I)\big), \quad {and} \label{eq:Osci-simpRec-1} \\
		& O_{k+1}(f,I)\geq \gamma_* \cdot  O_k(f,I)-\sum_{n=1}^{N} \big(\Var(q_n,I)+2M_f \Var(S_n,I)\big). \label{eq:Osci-simpRec-2}
	\end{align}
\end{lem}
\begin{proof}
	Given $D\subset I$, we know from Lemma~\ref{lem:osci-2} that for any $t\in D$,
	\begin{align*}
		\sum_{n=1}^N O(f, L_n(D)) &\leq \gamma(t) \cdot O(f,D) + \sum_{n=1}^N \big(O(q_n,D)+2 M_f O(S_n,D)\big)  \\
		&\leq \gamma^* \cdot O(f,D) + \sum_{n=1}^N \big(O(q_n,D)+2M_f O(S_n,D)\big).
	\end{align*}
	For any $k\in \bZp$ and $1\leq j\leq N^k$, by letting $D=I_j^k$ in the above inequality, we have
	\[
	\sum_{n=1}^N O(f,L_n(I_j^k)) \leq \gamma^* \cdot O(f,I_j^k) + \sum_{n=1}^{N} \big( O(q_n,I_j^k)+2 M_f O(S_n,I_j^k)\big).
	\]
	Hence   	
		\begin{align*}
			O_{k+1} (f,I)  &= \sum_{n=1}^{N} \sum_{j=1}^{N^k} O(f,L_n(I_j^k)) \\ 
			& \leq \gamma^* \cdot O_k(f,I)+\sum_{n=1}^{N} \big( O_k(q_n,I)+ 2M_f O_k(S_n,I)\big)\\
			& \leq \gamma^* \cdot  O_k(f,I)+\sum_{n=1}^{N} \big( \Var(q_n,I)+2 M_f  \Var(S_n,I)\big) ,
	\end{align*} 
	so that \eqref{eq:Osci-simpRec-1} holds. Similarly, we can prove that \eqref{eq:Osci-simpRec-2} holds. 
\end{proof}

From this lemma, we can obtain the upper box dimension estimate by $\gamma^*$ and the lower box dimension estimate by $\gamma_*$.

\begin{theo}\label{thm:Bdim-gammaStar}
	We have $\upBdim \Gamma f \leq \max\{1, 1+\log_N \gamma^*\}$. Furthermore, if $\gamma_*>1$ and $\Var(f,I)=\infty$, then $\lowBdim \Gamma f \geq 1+\log_N \gamma_*$.
\end{theo}
\begin{proof}
	Write $\eta=\sum_{n=1}^{N} (\Var(q_n,I)+2M_f\Var(S_n,I))$. It is clear that $\eta<\infty$ since $S_n$ and $q_n$ are of bounded variation on $I$ for each $n$.   If $\gamma^*\leq 1$,  from Lemma~\ref{lem:Osci-rec-est},
	\[
	O_{k+1}(f,I)\leq O_k(f,I)+\eta, \quad \forall k\geq 1,
	\] 
	so that
	\[
	O_k(f,I)\leq O_1(f,I)+(k-1)\eta, \quad \forall k\geq 1.
	\]
	Thus from Lemma~\ref{lem:box-dim-new}, $\upBdim \Gamma f\leq 1=\max\{1, 1+\log_N \gamma^*\}$.
	
	In the case that $\gamma^*> 1$, we know from Lemma~\ref{lem:Osci-rec-est} that
	\[
	O_{k+1}(f,I)+\frac{\eta}{\gamma^*-1}\leq \gamma^*\Big(O_k(f,I)+\frac{\eta}{\gamma^*-1}\Big), \quad \forall k\geq 1,
	\] 
	so that
	\[
	O_k(f,I)+\frac{\eta}{\gamma^*-1}\leq (\gamma^*)^{k-1}\Big(O_1(f,I)+\frac{\eta}{\gamma^*-1}\Big), \quad \forall k\geq 1.
	\]
	Thus from Lemma~\ref{lem:box-dim-new}, $\upBdim \Gamma f\leq 1+\log_N \gamma^*=\max\{1, 1+\log_N \gamma^*\}$. 
	
	Now we assume that $\gamma_*>1$ and $\Var(f,I)=\infty$. Using Lemma~\ref{lem:Osci-rec-est} again, we have
	\begin{equation}\label{eq:gammaLowstar-temp1}
		O_{k+1}(f,I)-\frac{\eta}{\gamma_*-1}\geq \gamma_*\Big(O_k(f,I)-\frac{\eta}{\gamma_*-1}\Big), \quad \forall k\geq 1.
	\end{equation}
	Since  $\Var(f,I)=\infty$, from Lemma~\ref{lem:VarOinfty}, there exists $k_0\in \bZp$ such that $O_{k_0}(f,I)>\eta/(\gamma_*-1)$. From \eqref{eq:gammaLowstar-temp1}, 
	\[
	O_k(f,I)-\frac{\eta}{\gamma_*-1}\geq (\gamma_*)^{k-k_0}\Big(O_{k_0}(f,I)-\frac{\eta}{\gamma_*-1}\Big), \quad \forall k\geq k_0.
	\]
	Thus from Lemma~\ref{lem:box-dim-new}, $\lowBdim \Gamma f\geq 1+\log_N \gamma_*$. 
	Hence, the theorem holds.
\end{proof}

\begin{rem}\label{remark:3-5}
	From the proof of Theorem~\ref{thm:Bdim-gammaStar}, it is easy to see that under the condition $\gamma_*>1$, the following two properties are equivalent:
	\begin{enumerate}
		\item $\Var(f,I)=\infty$,
		\item there exists $k_0\in \bZp$ such that 
		\[
		O_{k_0}(f,I)>(\gamma_*-1)^{-1} \sum_{n=1}^{N} (\Var(q_n,I)+2M_f\Var(S_n,I)).
		\]
	\end{enumerate}
\end{rem}

\begin{rem}\label{remark:3-6}
	Under the condition that the function $S_n$ is nonnegative for each $n$,  from  \eqref{eq:FIF-Rec-Expression},
	\[
	\sum_{n=1}^N f(L_n(x))
	=\sum_{n=1}^N  \big(S_n(x) f(x)+q_n(x)\big)		
	=\gamma(x)f(x)+\sum_{n=1}^N  q_n(x).		
	\] 
	Thus, by using arguments similar to the proof of \cite[Theorem 4.10]{JiangRuan},  we have
	\[
    	O_{k+1}(f,I)\geq \gamma_*O_k(f,I)- {M_f \Var(\gamma,I)} -\Var\Big(\sum_{n=1}^N q_n,I\Big).
	\]
	Thus, if $\gamma_*>1$ and the function $S_n$ is nonnegative on $I$ for each $1 \leq n \leq N$, then $\Var(f,I)=\infty$ if and only if there exists $k_0 \in \mathbb{Z}^+$ satisfying 
	\begin{equation*}
		O_{k_0}(f,I)> (\gamma_*-1)^{-1} \Big( M_f \Var(\gamma,I) +\Var(\sum_{n=1}^{N}q_n,I)\Big).
	\end{equation*}
\end{rem}

From Theorem~\ref{thm:Bdim-gammaStar}, we can obtain the following result.

\begin{theo}\label{thm:Bdim-gamma}
	Assume that $\gamma(x)\equiv\gamma_0$ for all $x\in I$. Then in the case that $\gamma_0>1$ and $\Var(f,I)=\infty$,
	\begin{equation}\label{eq:thm-Bdim-gamma}
		\Bdim \Gamma f=1+\log_N \gamma_0,
	\end{equation}
	otherwise $\Bdim \Gamma f=1$. 
\end{theo}
\begin{proof}
	Notice that  $\lowBdim \Gamma f\geq 1$ always holds since $f$ is a continuous function on $I$.
	In the case that $\gamma_0\leq 1$, it follows from Theorem~\ref{thm:Bdim-gammaStar} that $\upBdim \Gamma f\leq 1$. In the case that $\Var(f,I)<\infty$, we have $\lim_{k\to \infty} O_k(f,I)<\infty$. Thus, from  Lemma~\ref{lem:box-dim-new}, $\upBdim \Gamma f\leq 1$. Hence $\Bdim \Gamma f =1$ if $\gamma_0\leq 1$ or $\Var(f,I)<\infty$.
	
	Now we assume that $\gamma_0>1$ and $\Var(f,I)=\infty$. From Theorem~\ref{thm:Bdim-gammaStar},
	\[\upBdim \Gamma f\leq 1+\log_N \gamma_0\leq \lowBdim \Gamma f.\]
	so that  $\Bdim \Gamma f=1+\log_N \gamma_0$. Thus \eqref{eq:thm-Bdim-gamma} holds.
\end{proof}


    From Remark~\ref{remark:3-6} and Theorem~\ref{thm:Bdim-gamma}, we have the following result.
	\begin{cor}\label{cor:3-8}
		Assume that the function $S_n$ is nonnegative for each $n$, and  both $\gamma$ and $\sum_{n=1}^{N}q_n$ are constant functions on $I$. Then in the case that $\gamma(0)>1$ and $f$ is not a constant function, $\dim_B \Gamma f=1+\log_N \gamma(0)$, otherwise $\dim_B \Gamma f=1$.
	\end{cor}

\section{An example and further remarks}

\subsection{An example: generalized Weierstrass-type functions}

Weierstrass functions are classical fractal functions. There are many works on fractal dimensions of their graphs, including the box and Hausdorff dimension. Please see \cite{HuLau93,KMY84,RenShen21} and the references therein. For example, Ren and Shen \cite{RenShen21} studied the following Weierstrass-type functions
\begin{equation*}
	g_{\lambda,N}^\phi(x)=\sum_{k=0}^\infty \lambda^k \phi (N^k x), \quad x \in \mathbb{R},
\end{equation*}
where $N\geq 2$ is an integer, $1/N  < \lambda<1$ and $\phi: \mathbb{R} \to \mathbb{R}$ is a $\mathbb{Z}$-periodic real analytic function. They proved that either such a function is real analytic, or the Hausdorff dimension of its graph is equal to $2+\log_N\lambda$.

It is well known that $f=g_{\lambda,N}^\phi \big|_{[0,1]}$ is a generalized affine FIF. In fact,
 for $n \in \{1, 2,\ldots, N\}$ and $x \in [0,1]$, we have
\begin{align*}
	f\Big(\frac{x+n-1}{N}\Big) 
	= \phi \Big( \frac{ x+n-1}{N}\Big)+ \lambda \sum_{k=0}^\infty \lambda^k \phi (N^{k}x) 
	= \phi \Big( \frac{ x+n-1}{N}\Big)+ \lambda f(x)   .
\end{align*}
Thus, $\Gamma f=\bigcup_{n=1}^N W_n(\Gamma f)$, where for $n=1,2,\ldots,N$,
\begin{align*}
	W_n(x,y) =\Big(\frac{x+n-1}{N},\lambda y+  \phi\big(\frac{x+n-1}{N}\big)\Big), \quad (x,y)\in [0,1]\times \mathbb{R}.
\end{align*}

Let $\phi(x)=\cos(2\pi x)$. Then $g_{\lambda,N}^\phi$ is the classical Weierstrass function. Shen \cite{Shen18} proved that the Hausdorff dimension of its graph is equal to $2+\log_N \lambda$. Let $q_n(x)=\cos(2\pi(x+n-1)/N)$, $1\leq n\leq N$. It is easy to check that $\sum_{n=1}^{N} q_n(x)=0$ for all $x\in [0,1]$. Thus, from Corollary~\ref{cor:3-8}, we obtain the well known result $\dim_B \Gamma f=2+\log_N \lambda$, where $f=g_{\lambda,N}^\phi\big|_{[0,1]}$ and  $\phi(x)=\cos(2\pi x)$.

By Theorem~\ref{thm:Main-Results-2}, we can study the box dimension of generalized Weierstrass-type functions by replacing vertical scaling factor $\lambda$ with vertical scaling functions.

\begin{figure}[htbp]
    \centering
    \includegraphics[width=5cm]{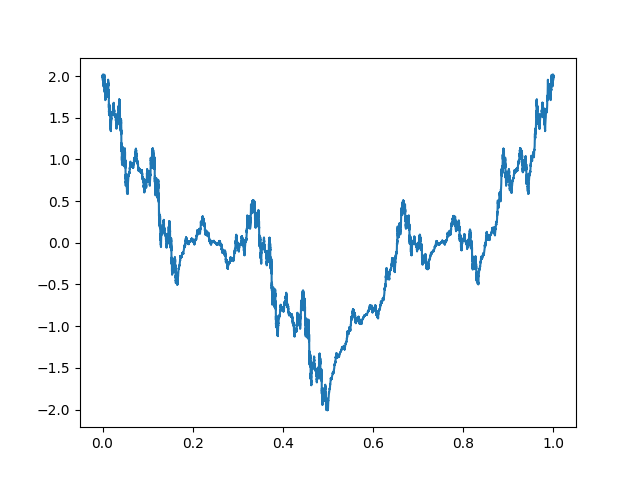}
    \caption{The FIF in Example~\ref{exam-1}}
    \label{fig:FIF-Exam-1}
\end{figure}

\begin{exam}\label{exam-1}

Let $I=[0,1]$, $N=3$, and $x_n=n/3$, $n=0,1,2,3$. Let vertical scaling functions $S_n$, $1\leq n\leq 3$ on $[0,1]$ are defined by
\begin{align*}
	S_1(x)=S_2(x)=\frac{1}{2}+\frac{\sin (2 \pi x)}{4}, \qquad
	S_3(x) = \frac{1}{2}-\frac{\sin (2 \pi x)}{4}.
\end{align*}
Then each function $S_n$ is positive on $I$ so that $\rho_*=\rho^*$.

Let $\phi(x)=\cos (2\pi x)$ and define maps $W_n$, $1 \leq n \leq 3$ by
\[
  W_n(x,y)=\Big(\frac{x+n-1}{3},S_n(x)y+\phi\big(\frac{x+n-1}{3}\big)\Big), \quad (x,y)\in [0,1]\times \bR.
\]

Let $x_n=n/3$ for $0\leq n\leq 3$. Let $y_0=y_2=2$ and $y_1=y_3=1/2$. Then it is easy to check that 
\[
  W_n(x_0,y_0)=(x_{n-1},y_{n-1}), \quad W_n(x_3,y_3)=(x_n,y_n)
\]
for $n=1,2,3$. Thus $\{W_n\}_{n=1}^3$ determines a generalized affine FIF $f$. Please see Figure~\ref{fig:FIF-Exam-1} for the graph of $f$.

Notice that $\gamma(x)=\sum_{n=1}^{3}|S_n(x)|=3/2+\sin(2\pi x)/4$  for $x\in [0,1]$.
Hence, $\gamma_* = 5/4$, $\gamma^*=7/4$ and $\lambda'= \pi/2 $ is a Lipschitz constant of $\gamma(x)$.

Let $q_n(x)=\phi((x+n-1)/3)$, $x\in [0,1]$, $n=1,2,3$. Then $\sum_{n=1}^{3}q_n(x)=0$ for all $x\in [0,1]$ so that 
$\Var(\sum_{n=1}^{3} q_n,I)=0$.

Now we calculate $M_f=\max\{|f(x)|:\, x\in I\}$. Notice that for any $x\in I$, there exists $n_1n_2\cdots \in \{1,2,3\}^\infty$ such that $x\in \bigcap_{k=1}^\infty L_{n_1}\circ L_{n_2}\circ\cdots \circ L_{n_k}(I)$. Thus from \eqref{eq:FIF-Rec-Expression}, we have
\begin{align*}
  f(x)&=q_{n_1}(L_{n_1}^{-1}(x))+S_{n_1}(L_{n_1}^{-1}(x))f(L_{n_1}^{-1}(x)) \\
      &=q_{n_1}(L_{n_1}^{-1}(x))+\sum_{k=2}^{\infty} \Big( \prod_{t=1}^{k-1} S_{n_t}\big(L_{n_t}^{-1} \circ \cdots \circ L_{n_1}^{-1}(x)\big) \Big) q_{n_k}\big( L_{n_k}^{-1}\circ \cdots \circ L_{n_1}^{-1}(x)\big).
\end{align*}
Hence, from $q^*:=\max\{|q_n(x)|: x\in [0,1], n=1,2,3\}=1$ and
\[
  S^*:=\max\{S_n(x): x\in [0,1], n=1,2,3\}=\frac{3}{4},
\] 
we have
$M_f\leq q^*\sum_{k=0}^{\infty}(S^*)^k= q^*/(1-S^*)=4$.
 Thus,
\begin{equation*}
	\frac{\lambda' M_f |I| + \Var(\sum_{n=1}^{3} q_n,I)}{\gamma_*-1} 
	\leq \frac{(\pi /2) \times 4 \times 1 +0}{5/4-1}= 8 \pi.
\end{equation*}
By calculation, $O_{6}(f,I)>8 \pi$. 
Thus, from Remark ~\ref{remark:3-6}, $\Var(f,I)=\infty$.

By definition of vertical scaling matrices, we have
\begin{align*}
\upM_1=
\begin{pmatrix}
	\frac{3}{4} & \frac{1}{2}+ \frac{\sqrt{3}}{8} & \frac{1}{2} \\
	\frac{3}{4} & \frac{1}{2}+ \frac{\sqrt{3}}{8} & \frac{1}{2} \\	
	\frac{1}{2} & \frac{1}{2}+ \frac{\sqrt{3}}{8} & \frac{3}{4} \\
\end{pmatrix},
\quad
\underline{M}_1=
\begin{pmatrix}
	\frac{1}{2} & \frac{1}{2}- \frac{\sqrt{3}}{8} & \frac{1}{4} \\
	\frac{1}{2} & \frac{1}{2}- \frac{\sqrt{3}}{8} & \frac{1}{4} \\
	\frac{1}{4} & \frac{1}{2}- \frac{\sqrt{3}}{8} & \frac{1}{2} \\
\end{pmatrix}.
\end{align*}
In general, by calculation, we can obtain the spectral radii of vertical scaling matrices $\rho(\upM_k)$ and $\rho(\lowM_k)$, $k=1,2,4,5,7,8$ as in Tabel 1. Thus, from Theorem~\ref{thm:Main-Results-2},
\[
	\dim_B \Gamma f =1 + \log_N \rho_{\bfS} \approx 1 +\log1.516/\log3 \approx 1.379.
\]

\begin{table}[htbp]
	\centering
	\begin{tabular}{|c|c|c|c|c|c|c|c|c|}
	\hline 												
	$k$ &  1	&  2 	&  4 & 5  & 7 &8   \\	\hline
	$\rho\big(\upM_k\big)$ 	&1.95688&1.68984 &1.53627&1.52277 &1.51675 &1.51625		\\	
     \hline	$\rho\big(\lowM_k\big)$ &1.05567 &1.33590 &1.49577&1.50926 &1.51525&1.51575\\	
     \hline    
	\end{tabular}
	\caption{$\rho(\upM_k)$ and $\rho(\lowM_k)$ in Example~\ref{exam-1}}
	\label{table:rho}
\end{table}

\end{exam}

\subsection{Further remarks}

From the proof of Theorem~\ref{thm:rho-upstar-exist}, we essentially prove that $\rho^*=\lim_{k\to \infty} \rho(\upM_k)$ exists without any restrictions on vertical scaling functions. This also holds for the existence of $\rho_*=\lim_{k\to \infty} \rho(\lowM_k)$. Hence, from Theorem~\ref{thm:Main-Results-2}, we have the following conjecture.
\begin{conj}
    Let $f$ be a generalized affine FIF satisfying conditions (A1)-(A3). Then $\rho_*=\rho^*$. Furthermore, in the case that $\mathrm{Var}(f,I)=\infty$
	and $\rho_{\bfS}>1$, $\Bdim \Gamma f=1+\log_N  \rho_{\bfS}$,	otherwise $\Bdim  \Gamma f=1$.
\end{conj}

\medskip

\bibliographystyle{amsplain}

\begin{thebibliography}{11}
	

	
	
\bibitem{APN17}
{\sc M.~N. Akhtar, M.~G.~P. Prasad and M.~A. Navascu\'{e}s},
{\em Box dimension of  $\alpha$-fractal function with variable scaling factors in subintervals},
Chaos Solitons Fractals, {\bf 103} (2017), 440--449.

\bibitem{All20}
{\sc P.~Allaart}
{\em The pointwise H\"{o}lder spectrum of general self-affine functions on an interval}, 
J. Math. Anal. Appl., {\bf 488} (2020), Paper No. 124096, 35 pp.


\bibitem{BRS20}{\sc B.~B\'{a}r\'{a}ny, M.~Rams and K.~Simon}, {\em Dimension of the repeller for a piecewise expanding affine map}. Ann. Acad. Sci. Fenn. Math., {\bf 45} (2020), 1135--1169.

\bibitem{Bar86} {\sc M.~F. Barnsley}, {\em Fractal functions and interpolation},  Constr. Approx., {\bf 2} (1986), 303--329.


\bibitem{BEHM89}{\sc M.~F. Barnsley, J. Elton, D. Hardin and P.~Massopust}, {\em Hidden variable fractal interpolation functions}, SIAM J. Math. Anal., {\bf 20} (1989), 1218--1242.

\bibitem{BH89}
{\sc M.~F. Barnsley and A.~N. Harrington}, {\em The calculus of fractal interpolation functions}, J. Approx. Theory, {\bf 57} (1989), 14--34.

\bibitem{BaMa15} {\sc M.~F. Barnsley and P.~R. Massopust}, {\em Bilinear fractal interpolation and box dimension}, J. Approx. Theory, {\bf 192} (2015), 362--378.


\bibitem{Bed89-ConAppr}
{\sc T. Bedford}, {\em Hölder exponents and box dimension for self-affine fractal functions},
Constr. Approx., {\bf 5} (1989), 33--48.

\bibitem{Bed89-Nonlinearity} 
{\sc T. Bedford}, {\em The box dimension of self-affine graphs and repellers}, 
Nonlinearity, \textbf{2} (1989), 53--71.


\bibitem{CKV15}
{\sc A.~K.~B. Chand, S.~K. Katiyar, and P. V. Viswanathan}, {\em Approximation using hidden variable fractal interpolation function},
J. Fractal Geom., \textbf{2} (2015), 81--114.


\bibitem{Dubuc18}
{\sc S. Dubuc},
{\em Non-differentiability and Hölder properties of self-affine functions}, 
Expo. Math., \textbf{36} (2018), 119--142.

\bibitem{Fal14} {\sc K.~J. Falconer}, {\em Fractal geometry: Mathematical foundation and applications (Third edition)}, John Wiley \& Sons, 2014.


\bibitem{Fen08} {\sc Z.~Feng}, {\em Variation and Minkowski dimesnsion of fractal interpolation surfaces}, J. Math. Anal. Appl., {\bf 345} (2008), 322--334.

\bibitem{HarMas86} {\sc D.~P. Hardin and P.~R. Massopust}, {\em  The capacity for a class of fractal functions},
Comm. Math. Phys.,  {\bf 105} (1986), 455--460.

\bibitem{HorJoh90} {\sc R. ~A. Horn and C. ~R. Johnson}, {\em Matrix analysis (Second edition)}, Cambridge University Press, 2013.

\bibitem{HuLau93} {\sc T.~Y. Hu and K.-S. Lau}, {\em Fractal dimensions and singularities of the Weierstrass type
functions}, Trans. Amer. Math. Soc., {\bf 335} (1993), 649--665.

\bibitem{JhaVer21} {\sc S.~Jha and S.~Verma}, {\em Dimensional analysis of $\alpha$-fractal functions}, Results Math., {\bf 76} (2021), Paper No. 186,  24 pp.

\bibitem{JiangRuan} {\sc L.~Jiang and H.-J.~Ruan}, {\em Box dimension of generalized affine fractal interpolation functions}, J. Fractal Geom., to appear. (DOI 10.4171/JFG/136)

\bibitem{KMY84} {\sc J.~L.~Kaplan, J.~Mallet-Paret and J.~A.~Yorke}, {\em The Lyapunov dimension of a nowhere
differentiable attracting torus}, Ergodic Theory Dynam. Systems, {\bf 4} (1984), 261--281.

\bibitem{KRZ18}{\sc Q.-G.~Kong, H.-J.~Ruan and S.~Zhang}, {\em Box dimension of bilinear fractal interpolation surfaces}, Bull. Aust. Math. Soc., {\bf 98} (2018), 113--121.


\bibitem{MaHa92} {\sc D.~S. Mazel and M.~H. Hayes},  {\em Using iterated function systems to model discrete sequences}, IEEE Trans. Signal Process, {\bf 40} (1992), 1724--1734.

\bibitem{RenShen21} {\sc H.~Ren and W.~Shen}, {\em A dichotomy for the Weierstrass-type functions}, Invent. Math., {\bf 226} (2021), 1057--1100.

\bibitem{RSY09} {\sc H.-J. Ruan, W.-Y. Su and K. Yao},  {\em Box dimension and fractional integral of linear fractal interpolation functions}, J. Approx. Theory, {\bf 161} (2009), 187--197.


\bibitem{Shen18} {\sc W.~Shen}, {\em Hausdorff dimension of the graphs of the classical Weierstrass functions}, 
Math. Z., {\bf 289} (2018), 223--266.

\bibitem{VDL94} {\sc J.~L. V\'{e}hel, K. Daoudi and E. Lutton}, {\em Fractal modeling of speech signals}, Fractals, {\bf 2} (1994), 379--382.

\bibitem{WY13} {\sc H.~Y. Wang and J.~S. Yu}, {\em Fractal interpolation functions with variable parameters and their analytical properties}, J. Approx. Thoery, {\bf 175} (2013), 1--18.


\end{thebibliography}

\end{document}